\documentclass[11pt]{article}

\usepackage[a4paper,margin=28mm]{geometry}
\usepackage[T1]{fontenc}
\usepackage{lmodern}
\usepackage[nopatch=footnote]{microtype}
\usepackage{amsmath,amssymb,amsthm,mathtools}
\usepackage{enumitem}
\usepackage{xcolor}
\usepackage[colorlinks=true,linkcolor=blue!55!black,citecolor=blue!55!black,urlcolor=blue!55!black]{hyperref}
\usepackage[nameinlink,noabbrev]{cleveref}
\usepackage{setspace}

\allowdisplaybreaks
\setlist[itemize]{leftmargin=2em,itemsep=0.25em,topsep=0.4em}
\setlist[enumerate]{leftmargin=2.2em,itemsep=0.25em,topsep=0.4em}

\newtheorem{theorem}{Theorem}[section]
\newtheorem{proposition}[theorem]{Proposition}
\newtheorem{lemma}[theorem]{Lemma}

\theoremstyle{definition}

\newtheorem{remark}[theorem]{Remark}

\newcommand{\R}{\mathbb R}
\newcommand{\Sph}{\mathbb S}
\newcommand{\B}{\mathbb B}
\newcommand{\cA}{\mathcal A}
\newcommand{\cE}{\mathcal E}
\newcommand{\cM}{\mathcal M}
\newcommand{\cC}{\mathcal C}
\newcommand{\cD}{\mathcal D}
\newcommand{\Sing}{\operatorname{Sing}}
\newcommand{\Tr}{\operatorname{Tr}}
\newcommand{\dist}{\operatorname{dist}}
\newcommand{\dd}{\,\mathrm d}
\newcommand{\eps}{\varepsilon}
\newcommand{\weakto}{\rightharpoonup}
\newcommand{\loc}{\mathrm{loc}}
\newcommand{\Hdim}{\dim_{\mathcal H}}
\newcommand{\1}{\mathbf 1}

\DeclareMathOperator{\degmap}{deg}

\title{Global Minimality and Rigidity of the Constraint Map Vortex}
\author{Bin Deng \quad Jiahuan Li \quad Yilu Liu \quad Xi-Nan Ma}
\date{}

\begin{document}
\maketitle

\begin{abstract}
We consider the minimization problem
\[
\min\left\{\int_{B_1}|Du|^2:\
u\in W^{1,2}(B_1;\mathbb R^n),\quad
u=x\ \text{on }\partial B_1,\quad |u|\ge a\right\},
\quad 0<a<1.
\]
Figalli, Guerra, Kim, and Shahgholian proved that the canonical radial vortex is the unique global minimizer for $n\ge7$, and asked whether the same holds in dimensions \(3\le n \le6\). We answer this question affirmatively, thereby completing the global minimality and rigidity of the constraint map vortex in every dimension \(n\ge3\).
\end{abstract}

\section{Introduction}\label{sec:introduction}

A minimizing constraint map minimizes the Dirichlet energy
\[
 E(u;\Omega)=\int_\Omega |Du|^2\dd x
\]
under the requirement that its image lie in the closure of a prescribed domain.
This variational problem combines two structures that are usually studied
separately. As in harmonic map theory, nontrivial boundary topology may force
discontinuities and tangent maps; as in obstacle problems, the constraint gives
rise to a coincidence set and an associated free boundary. The subject grew
out of the study of parametric variational problems with obstacles and harmonic
maps into manifolds with boundary
\cite{Hildebrandt1972,Tomi1972,DuzaarFuchs1986,Duzaar1987,ChenMusina1990}.
Its treatment of singularities draws on the classical regularity, compactness,
and defect theory for minimizing harmonic maps
\cite{SchoenUhlenbeck1982,SchoenUhlenbeck1984,HardtLin1987,HardtLin1989,
Luckhaus1988,HardtKinderlehrerLin1988}, while its free-boundary aspect is
related to the scalar obstacle problem \cite{Caffarelli1998}. Recent work has
established a sharper constraint map regularity theory and begun to clarify how
mapping singularities, the coincidence set, and the free boundary interact
\cite{FKSobstacle,FGKSBernoulli,FGKSsing,FGKSNotices2025,FGKSreview}.
A complementary global question is to determine which of the geometrically
possible configurations is selected by energy minimization.

The exterior of a ball with identity boundary data is the canonical degree-one
model for this question. For $n\ge3$ and $0<a<1$, set
\[
 \cA_a=
 \bigl\{u\in W^{1,2}(\B_1;\R^n):
 \Tr u=x\text{ on }\partial\B_1,\ |u|\ge a\text{ a.e. in }\B_1\bigr\},
 \qquad
 m_n(a)=\inf_{u\in\cA_a}E(u;\B_1).
\]
The same exterior-ball constrained minimization problem was recorded earlier
by Lin as an obstacle-type problem related to harmonic moduli and estimates
for the possible radius of the coincidence region
\cite[Example~1.5]{Lin2016FreeBoundary}.
The normalized boundary datum has degree one. Thus the problem retains the
degree-one topology of the classical map $x/|x|$, whose minimizing properties
were established in the classical harmonic map literature
\cite{JagerKaul1983,BCL1986,Lin1987,CoronGulliver1989}, while the exterior-ball
constraint gives rise to a contact core surrounding the resulting singularity.
There is a natural radial solution
\[
 u_{a,n}(x)=w_{a,n}(|x|)\frac{x}{|x|},
\]
whose modulus equals $a$ on a concentric ball and is given by a radial harmonic
profile outside it. Consequently, this single configuration contains the three
basic features of the theory: a topological singularity at the origin, a
contact core, and a smooth free boundary separating the core from an exterior
harmonic region.

Radial maps of this type were studied in \cite[Section~2.4]{FGKSsing}. Their
global minimality is substantially more delicate than their explicit
construction or local variational properties: the target
$\mathbb R^n\setminus\mathbb B_a$ is nonconvex, so the symmetry of the domain
and boundary data does not force a minimizer to be radial. Figalli, Guerra,
Kim, and Shahgholian proved that the radial map is the unique global minimizer
when $n\ge7$ \cite[Proposition~3.1]{FGKSreview}. Their proof uses the sharp
Hardy inequality, in the spirit of classical work on the minimality and
stability of equator maps \cite{JagerKaul1979,Baldes1984,Hong2000}. The
resulting energy comparison has the required sign precisely when
\[
 \frac{(n-2)^2}{4}>n-1,
 \qquad\text{equivalently,}\qquad n\ge7.
\]
They therefore posed the distinct global-minimality question in the remaining
dimensions $3\le n\le6$ as Problem~7.8, entitled ``Minimality of the vortex,''
in \cite{FGKSreview}.

Our main result resolves this low-dimensional problem.

\begin{theorem}\label{thm:main}
Let $3\le n\le 6$ and $0<a<1$. The map $u_{a,n}$ in
\eqref{eq:radial-map} is the unique minimizer of $E(\,\cdot\,;\B_1)$ over
$\cA_a$.
\end{theorem}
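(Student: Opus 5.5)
The plan is a comparison argument around the radial vortex that exploits the degree-one structure more carefully than the Hardy-inequality argument of \cite[Proposition~3.1]{FGKSreview}.

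\emph{Step 1: the profile and the first variation.} First make the profile explicit. On $[r_0,1]$ it is $w(r)=Ar+Br^{1-n}$, and $w\equiv a$ on $[0,r_0]$. The free-boundary radius $r_0$ and the constants $A,B$ are fixed by $w(r_0)=a$, $w(1)=1$ and the smooth-fit condition $w'(r_0)=0$. For a competitor $u=u_{a,n}+\varphi\in\cA_a$ we have
\[
E(u)-E(u_{a,n})=\int_{\B_1}|D\varphi|^2+2(n-1)a\int_{\B_{r_0}}\frac{e\cdot\varphi}{|x|^2},\qquad e=\frac{x}{|x|}.
\]
This holds because $u_{a,n}$ is equivariant harmonic outside $\B_{r_0}$, because $\Delta(ae)=-(n-1)a\,e/|x|^2$ in the core, and because the interface term vanishes by smooth fit. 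The constraint in the core reads $2a\,e\cdot\varphi+|\varphi|^2\ge 0$. Discarding the sign information except through $e\cdot\varphi\ge-|\varphi|^2/(2a)$ and applying Hardy loses exactly the factor responsible for the restriction $n\ge7$. For $3\le n\le6$ the proof must therefore keep more structure.

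\emph{Step 2: polar decomposition.} Keep the structure by writing $u=\rho\,\omega$ with $\rho=|u|\ge a$ and $\omega\in\Sph^{n-1}$. This gives
\[
|Du|^2=|D\rho|^2+\rho^2|D\omega|^2.
\]
Split $\varphi$ in the core into its radial part $(e\cdot\varphi)e$ and its tangential part. Only the radial part can make $e\cdot\varphi$ negative, and by the constraint it does so only to second order. Write the tangential part as $a(\omega-e)$ plus a quadratic remainder; its Dirichlet energy then comes with the weight $\rho^2\ge a^2$. I would then prove the sharp inequality
\[
\int_{\B_1}\rho^2|D\omega|^2\;\ge\;\int_{\B_1}w(|x|)^2\,\frac{n-1}{|x|^2}\,dx
\]
together with the complementary bound $\int|D\rho|^2\ge\int w'(|x|)^2$. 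In both inequalities equality holds only for $\omega=e$, $\rho=w$. I would not aim at the two bounds separately. Instead I would aim at a single coupled inequality in which the defect of one absorbs the loss in the other.

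\emph{Step 3: the main obstacle.} The key difficulty is the tangential lower bound. A sphere-by-sphere degree argument fails for $n\ge4$, since degree-one maps $\Sph^{n-1}\to\Sph^{n-1}$ can have arbitrarily small Dirichlet energy by concentration. The degree information must therefore enter through a calibration. Choose a closed $(n-1)$-form $\Phi=g(|y|)\,\sigma$ on $\R^n\setminus\B_a$, where $\sigma$ is the pullback of the volume form of $\Sph^{n-1}$ under $y\mapsto y/|y|$, so that $\Phi$ is closed. Then $\int_{\partial\B_r}u^*\Phi$ is controlled through $\partial\B_1$ and $d(u^*\Phi)=0$. Combine this with a pointwise inequality of the form
\[
|Du|^2\ \ge\ c(x)\,\bigl|(u^*\Phi)\wedge d\psi\bigr|+(\text{radial terms}),
\]
with weights $c(x)$ and $\psi(x)$ chosen from the ODE for $w$ so that every inequality is an equality along $u_{a,n}$. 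Integrating by parts then yields $E(u)\ge m_n(a)=E(u_{a,n})$.

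The hard point is to verify this pointwise inequality for $3\le n\le 6$. It reduces to a finite-dimensional algebraic inequality for the singular values of $Du$, weighted by $\rho\ge a$. I would check it first at the radial configuration and then prove it by an AM--GM/Hadamard-type argument. \emph{Uniqueness} follows from tracing the equality cases: they force $\omega=e$ and $\rho$ radial, hence $\rho=w$ by uniqueness for the one-dimensional obstacle ODE.
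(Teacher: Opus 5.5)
Your proposal has a genuine gap. All of the work is deferred to Step~3, and the mechanism proposed there cannot succeed for $n\ge4$.

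Steps~1--2 only reformulate the problem. The two separate bounds in Step~2 are false: the hedgehog $x/|x|$ has $D\rho=0$, and $u=\rho e$ with $\rho\equiv a$ on $\B_s$ ($s$ close to $1$) violates the tangential one. So the ``coupled inequality'' you aim at is literally $E(u)\ge E(u_{a,n})$.

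In Step~3 there are three problems.
\begin{itemize}
\item The form $\Phi=g(|y|)\sigma$ is closed only if $g$ is constant, since $d\Phi=g'(|y|)\,d|y|\wedge\sigma$ and $d|y|\wedge\sigma=|y|^{1-n}dy_1\wedge\cdots\wedge dy_n\neq0$.
\item Even for $\Phi=\sigma$, $d(u^*\sigma)$ is not zero for a degree-one competitor. For maps with point singularities it is a sum of Dirac masses weighted by the local degrees. For general $W^{1,2}$ maps with $n\ge4$, the form $u^*\sigma$ is built from $(n-1)\times(n-1)$ minors, so it lies only in $L^{2/(n-1)}$ and is not a priori integrable.
\item The decisive problem is homogeneity. $u^*\Phi$ has degree $n-1$ in $Du$ while $|Du|^2$ is quadratic. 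Replacing $Du(x)$ by $tDu(x)$ and letting $t\to\infty$ shows that no pointwise inequality $|Du|^2\ge c(x)|(u^*\Phi)\wedge d\psi|+(\text{quadratic terms})$ with $c\not\equiv0$ can hold when $n-1>2$.
\end{itemize}
The last point is the same concentration phenomenon you correctly cited against the sphere-by-sphere argument. A pointwise calibration against the Jacobian does not evade it, and no AM--GM/Hadamard argument can repair it. Only for $n=3$, where the Jacobian is quadratic as in Brezis--Coron--Lieb, does this type of calibration have a chance, and even there you do not exhibit the weights. Nothing in your scheme explains why $3\le n\le6$ is the relevant range either.

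The paper never proves a pointwise inequality at fixed $a$, and it uses the degree only qualitatively, on a harmonic object.
\begin{itemize}
\item Rescaling gives $m_n(a)-\sigma_{n-1}=a^nc_{1/a,n}$, which reduces the small-$a$ behavior to the whole-space cell problem.
\item A cell minimizer has degree one at infinity. Its singular set is finite (Brezis--Coron--Lieb for $n=3$; the Lin--Wang dimension bound for $n=4,5,6$, which is where $n\le6$ enters). Hence some singularity has nonzero local degree.
\item The tangent map there is $\varphi(x/|x|)$, with $\varphi$ a \emph{harmonic} self-map of $\Sph^{n-1}$ of nonzero degree. Conformal balancing, Ramanathan's theorem (a harmonic map has least energy in its conformal orbit), and the spherical Poincar\'e inequality give density $\ge\theta_n$. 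This is precisely what excludes concentration.
\item A translated calibration with a \emph{linear} divergence-free matrix field $A_p$ converts this density bound into $c_{\infty,n}=\kappa_n$, with no homogeneity issue.
\item The endpoint information is propagated to all $a\in(0,1)$ via $m_n'(a)=2C/a$, the Pohozaev and testing identities, and Cauchy--Schwarz, which make $\Psi_n$ nondecreasing.
\item Uniqueness comes from the boundary Cauchy data, unique continuation through the noncontact annulus, and the sharp minimality of $x/|x|$ (Brezis--Coron--Lieb, Hong) in the core.
\end{itemize}
Your proposal contains no substitute for these mechanisms, so as written it does not prove the theorem.
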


Together with \cite[Proposition~3.1]{FGKSreview}, the theorem gives the
complete classification in every dimension $n\ge3$, which is exactly the
range in which the degree-one vortex has finite Dirichlet energy.
The conclusion is genuinely global: it
excludes symmetry breaking, splitting of the boundary degree among several
singularities, and nonradial coincidence sets for minimizing configurations.

From the harmonic map side, a related body of work concerns quantitative
rigidity for sphere-valued maps: see
\cite{BernandMantelMuratovSimon2021,Topping2023} in degree one,
\cite{GuerraLamyZemas2025} in higher dimensions, and
\cite{Rupflin2023,DengSunWei2024} for general degree. Such estimates also
arise in conformal-limit problems for skyrmions and bimerons
\cite{BernandMantelMuratovSimon2021,DengIgnatLamy2025}. The present
constraint map problem has the additional feature that the modulus is free,
coupling the mapping singularity to a coincidence set and an obstacle-type
free boundary.

The proof does not replace the Hardy estimate by another direct comparison at
a fixed value of $a$. Instead, we first determine the sharp asymptotic behavior
of the minimum energy as $a\downarrow0$, and then propagate this endpoint
information to all $a\in(0,1)$. We describe three main steps.

\paragraph{The sharp endpoint asymptotic.}
Let $M_n(a)=E(u_{a,n};\B_1)$. Since $u_{a,n}$ is admissible, one has
$m_n(a)\le M_n(a)$. To obtain the reverse inequality near $a=0$, we rescale by
the obstacle radius. This gives the exact relation
\[
 m_n(a)-\sigma_{n-1}=a^n c_{a^{-1},n},
\]
where $c_{L,n}$ is the corresponding rescaled constrained minimization problem
on $\B_L$. As $L\to\infty$, this leads to the whole-space minimization problem
\[
 c_{\infty,n}=\inf\left\{
 \int_{\R^n}|D(U-x)|^2\dd x:
 U-x\in D^{1,2}(\R^n;\R^n),\quad |U|\ge1\ \text{a.e.}
 \right\}.
\]
Every minimizer of the whole-space problem has degree one at infinity and hence
has a mapping singularity. We prove that some singularity with nonzero local
degree satisfies the sharp density lower bound of the standard vortex. In
dimension three this uses the tangent-map classification of
Brezis--Coron--Lieb \cite{BCL1986}. In dimensions $4\le n\le6$, the dimension
bound for the singular set due to Lin--Wang \cite{LinWang2006} allows us to
localize the topological degree near a singular point; after passing the local
degree to a tangent map, conformal balancing, Ramanathan's theorem
\cite{Ramanathan1986}, and the Poincar\'e inequality on the sphere give the
required density estimate. An exact translated energy identity centered at
such a singularity then converts this density estimate into the sharp value of
$c_{\infty,n}$. As a consequence,
\[
 m_n(a)=\sigma_{n-1}+\kappa_n a^n+o(a^n)
 \qquad\text{as }a\downarrow0,
\]
with the same coefficient as the radial branch.

\paragraph{Propagation from the endpoint.}
For an arbitrary minimizer at parameter $a$, variations of the target radius
give a formula for $m_n'(a)$ in terms of the energy on the coincidence set.
Domain stationarity gives a Pohozaev identity, while testing the Euler equation
against the map itself relates the interior energy to the normal derivative on
$\partial\B_1$. Combining these identities with Cauchy--Schwarz produces a
differential inequality for $m_n$. Equivalently, if
\[
 \beta(a)^2=(n-1)-\frac{n-2}{\sigma_{n-1}}m_n(a),
\]
then the quantity
\[
 \Psi_n(a)=
 \frac{(1-\beta(a))(\beta(a)+n-1)^{n-1}}{a^n}
\]
is nondecreasing. The radial comparison gives
$\Psi_n(a)\le(n-1)^{n-1}$, whereas the sharp asymptotic expansion of $m_n(a)$
as $a\downarrow0$ shows that $\Psi_n(a)$ approaches the same value. Hence
equality holds for every $a\in(0,1)$, and therefore $m_n(a)=M_n(a)$ throughout
the parameter range.

\paragraph{Rigidity.}
The equality cases in the preceding inequalities first determine the full
Cauchy data of a minimizer on $\partial\B_1$. Harmonic Cauchy uniqueness and
analytic continuation then identify the map with $u_{a,n}$ throughout the
exterior noncontact annulus. On the remaining ball, we write
\[
 u=\rho v,\qquad \rho=|u|,\qquad v=\frac{u}{|u|}\in S^{n-1}.
\]
The problem then reduces to the sharp minimality of the sphere-valued vortex
with identity boundary data, due to Brezis--Coron--Lieb in dimension three and
Hong in dimensions $4\le n\le6$ \cite{BCL1986,Hong2001}. Equality forces both
the modulus and the angular map to be radial, completing the uniqueness proof.

The only dimension-dependent part of the argument is the sharp density
estimate for a singularity carrying nonzero local degree. The rescaling to the
whole-space problem, the translated energy identity, the propagation in the
parameter $a$, and the final rigidity argument are otherwise valid for every
$n\ge3$. Thus our proof provides a low-dimensional alternative to the
Hardy-based argument, relying instead on the topology of singularities and a
sharp whole-space energy estimate.

The rest of the paper is organized as follows.
\Cref{sec:preliminaries} introduces the variational framework and the radial
branch. In \cref{sec:cell} we study the rescaled whole-space minimization
problem and establish the sharp energy estimate needed as $a\downarrow0$.
\Cref{sec:global-minimality} uses this asymptotic information to prove global
minimality for every $a\in(0,1)$. Finally, \cref{sec:uniqueness} analyzes the
equality cases and proves rigidity and uniqueness.

\section{Preliminaries}\label{sec:preliminaries}

Throughout, \(n\in\{3,4,5,6\}\) unless another dimension is explicitly
indicated.  We use
\[
 \B_r(x_0)=\{x\in\R^n:|x-x_0|<r\},\qquad
 \B_r=\B_r(0),\qquad
 \sigma_{n-1}=|\Sph^{n-1}|=n|\B_1|.
\]
For matrices \(A\) and \(B\), we write
\(A:B=\operatorname{tr}(A^{\mathsf T}B)\) for their Frobenius inner product.

For \(R\in(0,1)\), let
\[
 A_n(R)=\frac{nR}{n-1+R^n}.
\]
Since
\[
 A_n'(R)=\frac{n(n-1)(1-R^n)}{(n-1+R^n)^2}>0,\qquad
 A_n(0+)=0,\qquad A_n(1)=1,
\]
there is a unique \(R=R(a)\in(0,1)\) such that
\begin{equation}\label{eq:aR}
 a=\frac{nR}{n-1+R^n}.
\end{equation}

\begin{remark}[Relation with the planar Nitsche problem]
Formally setting $n=2$ in \eqref{eq:aR} gives
\[
 \frac1a=\frac12\left(R+\frac1R\right),
\]
which is the equality case of the Nitsche bound for harmonic homeomorphisms
from $\{R<|z|<1\}$ onto $\{a<|z|<1\}$
\cite{IwaniecKovalevOnninen2011}. The corresponding constant-modulus inner
branch corresponds to the hammering region in the relaxed energy-minimization problem for
planar annuli \cite[Section~1.7]{IwaniecOnninen2012}. This does not yield a
two-dimensional case of the present full-ball problem: the map $a x/|x|$ has
infinite Dirichlet energy at the origin when $n=2$, whereas its energy is
finite for $n\ge3$.
\end{remark}

Define the radial comparison map by
\begin{equation}\label{eq:radial-map}
 u_{a,n}(x)=
 \begin{cases}
 \displaystyle a\frac{x}{|x|},&0<|x|\le R,\\[2mm]
 \displaystyle
 \frac{(n-1)|x|+R^n|x|^{1-n}}{n-1+R^n}\frac{x}{|x|},
 &R<|x|\le1.
 \end{cases}
\end{equation}
The value at \(x=0\) is irrelevant for its Sobolev class.  We shall also write
\(u_{a,n}(x)=w_R(|x|)\frac{x}{|x|}\).

Let
\[
 \overline{\cM}_a=\{y\in\R^n:|y|\ge a\}.
\]
A minimizing constraint map is a map that minimizes the Dirichlet energy
against compactly supported competitors with values in
\(\overline{\cM}_a\).  We call $x$ a regular point of a local minimizer $u$
if $u$ has a continuous representative in a neighborhood of $x$, and write
\[
 \operatorname{Reg}u=\{x:x\text{ is a regular point of }u\},
 \qquad
 \Sing u=\Omega\setminus\operatorname{Reg}u.
\]
The epsilon-regularity theorem and the resulting singular-density lower bound
\cite[Theorem~2.4 and Corollary~2.5]{FGKSsing} improve this representative to
$C^{1,1}_{\mathrm{loc}}$ on $\operatorname{Reg}u$.  Throughout the paper,
pointwise statements about $u$
on $\operatorname{Reg}u$ refer to this representative, while pointwise
statements about the coincidence set refer to the continuous representative
of the gap function constructed in \cref{prop:local-consequences}.  For such
maps, the Euler equation established in
\cite[equation~(1.2)]{FGKSsing} is
\begin{equation}\label{eq:EL}
 \Delta u=-\frac{|Du|^2}{a^2}u\,\1_{\{|u|=a\}}
 \qquad\text{in }\cD'(\Omega).
\end{equation}
There is no additional measure on the free boundary.  With
\begin{equation}\label{eq:norm-energy}
 \cE(u,x_0,r)=r^{2-n}\int_{\B_r(x_0)}|Du|^2\dd x,
\end{equation}
the monotonicity formula \cite[Lemma~2.1]{FGKSsing} gives
\begin{equation}\label{eq:monotonicity}
 \cE(u,x_0,s)-\cE(u,x_0,r)
 \ge 2\int_{\B_s(x_0)\setminus\B_r(x_0)}
 |x-x_0|^{2-n}|\partial_{r_{x_0}}u|^2\dd x
\end{equation}
whenever \(0<r<s<\dist(x_0,\partial\Omega)\).

We shall also use domain stationarity.  If \(X\in C_c^1(\Omega;\R^n)\),
\(\Phi_t(x)=x+tX(x)\), and \(u_t=u\circ\Phi_t^{-1}\), then \(u_t\) is
admissible for both signs of \(t\).  Differentiating
\[
 E(u_t;\Omega)
 =\int_\Omega
 \bigl|Du(x)D\Phi_t(x)^{-1}\bigr|^2\det D\Phi_t(x)\dd x
\]
at \(t=0\) gives
\begin{equation}\label{eq:stress}
 \int_\Omega
 \bigl(|Du|^2\delta_{ij}-2\partial_i u\cdot\partial_j u\bigr)
 \partial_i X_j\dd x=0
 \qquad
 \forall X\in C_c^1(\Omega;\R^n).
\end{equation}
\begin{proposition}\label{prop:finite-existence}
For every $0<a<1$, the infimum $m_n(a)$ is attained.
\end{proposition}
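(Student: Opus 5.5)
We argue by the direct method of the calculus of variations. The admissible class $\cA_a$ is nonempty: the radial map $u_{a,n}$ from \eqref{eq:radial-map} belongs to it. Indeed, $|u_{a,n}|\ge a$ everywhere, since $w_R$ is increasing from $w_R(R)=a$ to $w_R(1)=1$. Moreover, its energy is finite because $|D(x/|x|)|^2=(n-1)|x|^{-2}$ is integrable near the origin when $n\ge3$. Hence $m_n(a)<\infty$. We take a minimizing sequence $u_k\in\cA_a$ with $E(u_k;\B_1)\to m_n(a)$.

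We first obtain a uniform $W^{1,2}$ bound. Since $\Tr u_k=x$ on $\partial\B_1$ for all $k$, the differences $u_k-x$ lie in $W^{1,2}_0(\B_1;\R^n)$. The Poincaré inequality then gives
\[
\|u_k-x\|_{L^2(\B_1)}\le C\|D(u_k-x)\|_{L^2(\B_1)}\le C\bigl(E(u_k;\B_1)^{1/2}+|\B_1|^{1/2}\sqrt n\bigr),
\]
so $(u_k)$ is bounded in $W^{1,2}(\B_1;\R^n)$.

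Passing to a subsequence, we have $u_k\weakto u$ weakly in $W^{1,2}$, $u_k\to u$ strongly in $L^2$ by Rellich, and $u_k\to u$ almost everywhere. We then check that $u\in\cA_a$. For the boundary condition, the affine closed subspace $x+W^{1,2}_0$ is weakly closed, so $\Tr u=x$. For the constraint, almost-everywhere convergence preserves $|u|\ge a$ a.e., because the set $\{|y|\ge a\}$ is closed.

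Finally, the Dirichlet energy is weakly lower semicontinuous, being a continuous convex functional of $Du$. Therefore $E(u;\B_1)\le\liminf_k E(u_k;\B_1)=m_n(a)$, and so $u$ attains the infimum.

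No step here is a genuine obstacle. The only point requiring care is that the nonconvex constraint $|u|\ge a$ survives the limit, and this follows from pointwise a.e. convergence via compact embedding rather than from any convexity.
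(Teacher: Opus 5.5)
Your proposal is correct and follows the paper's proof essentially step by step: the Poincar\'e inequality gives the $W^{1,2}$ bound, weak compactness and Rellich give almost-everywhere convergence that preserves the nonconvex constraint, the trace class is weakly closed, and the energy is lower semicontinuous. The only difference is cosmetic: the paper shows $\cA_a\neq\emptyset$ using the hedgehog $x/|x|$, which has modulus one. This makes admissibility immediate, whereas your choice of $u_{a,n}$ requires checking that $w_R$ is monotone.
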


\begin{proof}
The class $\cA_a$ is nonempty: the hedgehog
\[
 h(x)=\frac{x}{|x|}
\]
belongs to $W^{1,2}(\B_1;\R^n)$ for $n\ge3$, has trace $x$ on
$\partial\B_1$, and satisfies $|h|=1\ge a$ almost everywhere. Let
$(u_j)\subset\cA_a$ be a minimizing sequence. Since
$u_j-x\in W_0^{1,2}(\B_1;\R^n)$, Poincare's inequality gives
\[
 \|u_j-x\|_{L^2(\B_1)}
 \le C\|D(u_j-x)\|_{L^2(\B_1)}
 \le C\bigl(\|Du_j\|_{L^2(\B_1)}+\sqrt n\,|\B_1|^{1/2}\bigr).
\]
Thus $(u_j)$ is bounded in $W^{1,2}(\B_1)$. Passing to a subsequence,
\[
 u_j\weakto u\quad\text{in }W^{1,2}(\B_1),
 \qquad
 u_j\to u\quad\text{in }L^2(\B_1),
\]
and $u_j(x)\to u(x)$ for almost every $x$. Continuity of the trace operator and weak closedness of the affine trace class give $\Tr u=x$ on $\partial\B_1$.

The target $\{y:|y|\ge a\}$ is nonconvex, so the obstacle constraint is not passed by weak convergence alone. The almost-everywhere convergence is the essential point: it gives
\[
 |u(x)|=\lim_{j\to\infty}|u_j(x)|\ge a
 \quad\text{for a.e. }x.
\]
Hence $u\in\cA_a$. Weak lower semicontinuity gives
\[
 E(u;\B_1)\le\liminf_{j\to\infty}E(u_j;\B_1)=m_n(a),
\]
so $u$ is a minimizer.
\end{proof}

\begin{lemma}\label{lem:modulus-truncation}
Every minimizer $u$ of $m_n(a)$ satisfies
\begin{equation}\label{eq:modulus-bounds}
 a\le |u|\le1\qquad\text{a.e. in }\B_1.
\end{equation}
\end{lemma}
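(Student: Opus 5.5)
The lower bound $|u|\ge a$ holds by admissibility, so only $|u|\le1$ needs proof. I would use radial truncation onto the closed unit ball. Set
\[
 \pi(y)=\begin{cases} y,&|y|\le1,\\ y/|y|,&|y|>1,\end{cases}
\]
the nearest-point projection of $\R^n$ onto $\overline{\B_1}$. It is $1$-Lipschitz, so $\tilde u=\pi\circ u\in W^{1,2}(\B_1;\R^n)$. On the boundary, $|x|=1$, so $\pi(x)=x$ and $\Tr\tilde u=x$. Since $a<1$, $\pi$ maps $\{|y|\ge a\}$ into itself, hence $|\tilde u|\ge a$ a.e. and $\tilde u\in\cA_a$.

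For the energy comparison I would use the chain rule for Lipschitz maps composed with Sobolev maps. On $\{|u|\le1\}$ we have $D\tilde u=Du$ a.e. On $\{|u|>1\}$ we have $D\tilde u=\bigl(I-\hat u\otimes\hat u\bigr)Du/|u|$ with $\hat u=u/|u|$. Writing $\rho=|u|$, this gives
\[
 |D\tilde u|^2=\frac{|Du|^2-|D\rho|^2}{\rho^2}\le |Du|^2-|D\rho|^2
 \quad\text{a.e. on }\{\rho>1\},
\]
because $|Du|^2=|D\rho|^2+\rho^2|D\hat u|^2$ and $|D\tilde u|=|D\hat u|$ there. Hence
\[
 E(\tilde u)\le E(u)-\int_{\{\rho>1\}}\Bigl(|D\rho|^2+(1-\rho^{-2})\rho^2|D\hat u|^2\Bigr).
\]
By minimality $E(\tilde u)\ge E(u)$, so $D\rho=0$ a.e. on $\{\rho>1\}$. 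Equivalently, $D(\rho-1)^+=0$ a.e. in $\B_1$.

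The trace of $(\rho-1)^+$ is $(|x|-1)^+=0$ on $\partial\B_1$, so $(\rho-1)^+\in W^{1,2}_0(\B_1)$. A $W^{1,2}_0$ function with zero gradient vanishes by Poincaré's inequality. Therefore $\rho\le1$ a.e., which proves the lemma.

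The main point needing care is justifying the chain rule on the open set $\{\rho>1\}$, since $\rho$ is only Sobolev and that set is defined only a.e. I would handle this by noting that $\pi$ is piecewise smooth and Lipschitz, and then either applying the standard chain rule for Lipschitz maps with a null-set argument on $\{\rho=1\}$, where $D\rho=0$ a.e., or by regularising with $\pi_\eps$. An alternative is to avoid derivatives of $\pi$ altogether: use $|D(\pi\circ u)|\le|Du|$ a.e. to get $E(\tilde u)\le E(u)$, so $\tilde u$ is also a minimizer. Then compare $u$ with $\tilde u$ via the strict inequality above to conclude $\tilde u=u$.
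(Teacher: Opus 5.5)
Your proof is correct and follows the paper's argument: your projection $\pi\circ u$ is exactly the paper's competitor $\min\{\rho,1\}\,\omega$ with $\omega=u/|u|$, and the energy gap $\int_{\{\rho>1\}}\bigl(|D\rho|^2+(\rho^2-1)|D\omega|^2\bigr)\dd x$ is the same in both. Your displayed inequality is in fact an identity. The remaining steps also match the paper: minimality forces $D(\rho-1)_+=0$, and Poincar\'e's inequality in $W^{1,2}_0$ then gives $\rho\le 1$ a.e.
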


\begin{proof}
The lower bound is the constraint. Since $|u|\ge a>0$, define
\[
 \rho=|u|,
 \qquad
 \omega=\frac{u}{|u|}.
\]
The map $y\mapsto \frac{y}{|y|}$ is Lipschitz on $\{|y|\ge a\}$, hence
$\omega\in W^{1,2}(\B_1;\Sph^{n-1})$. The Sobolev chain rule gives
\begin{equation}\label{eq:polar-decomp}
 Du=\omega\otimes D\rho+\rho D\omega,
 \qquad
 |Du|^2=|D\rho|^2+\rho^2|D\omega|^2,
\end{equation}
because $\omega\cdot\partial_i\omega=0$ almost everywhere.

Set
\[
 \widetilde u=\min\{\rho,1\}\,\omega.
\]
On the boundary, $\rho=|x|=1$ in the trace sense, so $\widetilde u$ has the same trace as $u$. Also $|\widetilde u|\ge a$. Thus $\widetilde u\in\cA_a$. By \eqref{eq:polar-decomp},
\begin{align*}
 E(u;\B_1)-E(\widetilde u;\B_1)
 &=\int_{\{\rho>1\}}
 \Bigl(|D\rho|^2+(\rho^2-1)|D\omega|^2\Bigr)\dd x\\
 &\ge0.
\end{align*}
Minimality of $u$ forces equality. In particular,
\[
 D(\rho-1)_+=\1_{\{\rho>1\}}D\rho=0
 \qquad\text{a.e.}
\]
The trace of $(\rho-1)_+$ is zero, so $(\rho-1)_+\in W_0^{1,2}(\B_1)$. Poincare's inequality gives $(\rho-1)_+=0$, proving $|u|\le1$ almost everywhere.
\end{proof}

\begin{proposition}\label{prop:boundary-collar}
Let $u$ minimize $m_n(a)$. There exists $\delta=\delta(u,a)>0$ such that
\begin{equation}\label{eq:collar}
 u\in C^\infty\bigl(\overline{\B_1\setminus\B_{1-\delta}}\bigr),
 \qquad
 |u|>a\quad\text{there},
 \qquad
 \Delta u=0\quad\text{in }\B_1\setminus\overline{\B_{1-\delta}}.
\end{equation}
\end{proposition}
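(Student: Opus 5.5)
The plan is to show that a minimizer cannot touch the obstacle near $\partial\B_1$. Once $|u|>a$ on a collar, the constraint is inactive there and $u$ is harmonic. Boundary regularity for harmonic functions with the analytic datum $x$ then gives smoothness up to $\partial\B_1$.

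\textbf{Step 1: small boundary energy.} Put $\phi=u-x\in W^{1,2}_0(\B_1;\R^n)$ and extend $u$ by $x$ outside $\B_1$. The extension $\bar u$ is in $W^{1,2}_{\loc}(\B_2)$, satisfies $|\bar u|\ge a$, and minimizes the Dirichlet energy among maps agreeing with $x$ outside $\B_1$. Absolute continuity of $\int|Du|^2$ gives
\[
 r^{2-n}\int_{\B_r(x_0)}|D\bar u|^2\,dx\to0
\]
uniformly as $r\to0$, for $x_0\in\partial\B_1$, because $\bar u$ is smooth outside $\B_1$ and $\phi$ has zero trace. This alone is not enough: normalized energy is not absolutely continuous in the scale-invariant sense. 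I would instead invoke the boundary version of the $\varepsilon$-regularity theory for constraint maps, or argue as follows. The boundary datum $x$ is smooth and takes values in $\{1\le|y|\}$, strictly inside the interior of $\overline{\cM}_a$. The boundary monotonicity and $\varepsilon$-regularity for harmonic maps into manifolds with boundary, with smooth Dirichlet data (Schoen--Uhlenbeck, Duzaar--Fuchs style), show that boundary points are regular. Hence $u$ is continuous on a collar $\overline{\B_1\setminus\B_{1-\delta}}$ with $u=x$ on $\partial\B_1$.

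\textbf{Step 2: staying away from the obstacle.} By continuity up to the boundary and $|x|=1>a$ on $\partial\B_1$, we can shrink $\delta$ so that $|u|>(1+a)/2>a$ on the collar. This uses the continuous representative and the compactness of $\partial\B_1$.

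\textbf{Step 3: harmonicity and smoothness.} On the open collar the coincidence set is empty, so \eqref{eq:EL} gives $\Delta u=0$ there. Alternatively, compactly supported perturbations with small $L^\infty$ norm stay admissible. Boundary elliptic regularity with the smooth datum $x$ on the smooth boundary $\partial\B_1$ gives $u\in C^\infty$ up to $\partial\B_1$ on a slightly smaller closed collar. Taking an inner radius $1-\delta'$ inside the collar yields \eqref{eq:collar}.

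\textbf{Main obstacle.} The hard part is Step 1, continuity up to the boundary. Interior $\varepsilon$-regularity \cite{FGKSsing} does not directly cover boundary points. I would try first a reflection or extension trick. The map $\bar u$ is a local minimizer in $\B_2$ only among competitors equal to $x$ outside $\B_1$, so I would instead adapt Luckhaus-type boundary comparison \cite{Luckhaus1988}. The key input is that smooth data give small boundary density, $r^{2-n}\int_{\B_r(x_0)\cap\B_1}|Du|^2\to0$. I would obtain this via the boundary monotonicity identity derived from \eqref{eq:stress} with vector fields tangent to $\partial\B_1$. A comparison then shows $u$ is close in $L^\infty$ to its harmonic replacement, which keeps modulus near $1$.
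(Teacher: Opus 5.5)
Your outline has the same structure as the paper's proof. It first gets continuity of $u$ up to $\partial\B_1$. It then uses $|x|=1>a$ and compactness of $\partial\B_1$ to keep $|u|>a$ on a collar, and finally obtains $\Delta u=0$ from two-sided variations plus classical boundary regularity for the datum $x$. Steps~2 and~3 are fine. For Step~1 the paper does exactly what your first option says: it invokes \cite[Theorem~II]{DuzaarFuchs1986}.

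What is missing is the check that this theorem applies. That boundary regularity result, like Schoen--Uhlenbeck's boundary theory, concerns minimizers with values in a compact target $\overline M$ with smooth boundary. Here $\overline{\cM}_a=\{|y|\ge a\}$ is unbounded, and you have no a priori $L^\infty$ bound on $u$. The paper supplies the bound through \cref{lem:modulus-truncation}. Writing $u=\rho\omega$ with $|Du|^2=|D\rho|^2+\rho^2|D\omega|^2$, the competitor $\min\{\rho,1\}\omega$ has the same trace and no larger energy, which forces $a\le|u|\le1$ a.e. Hence $u$ also minimizes among maps into the compact annulus $K_{a,2}=\overline M$, with $M=\{a<|y|<2\}$, and the datum $x$ takes values in the open set $M$. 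Only then does Theorem~II give continuity in a full boundary neighborhood. You should insert this reduction before Step~1; any a priori bound $|u|\le R_0$ would do.

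The fallback in your last paragraph would not work as written. The identity \eqref{eq:stress} holds only for $X\in C^1_c(\B_1;\R^n)$. A field tangent to $\partial\B_1$ but not vanishing there moves the boundary values, so $u\circ\Phi_t^{-1}$ no longer has trace $x$ and is not an admissible competitor. Correcting the trace creates boundary error terms that must be handled by a comparison argument which also respects $|\cdot|\ge a$. Near $\partial\B_1$ you do not yet know that $|u|$ stays away from $a$, so that comparison is not available. Moreover, the smallness $r^{2-n}\int_{\B_r(x_0)\cap\B_1}|Du|^2\to0$ that you call the key input is essentially the conclusion of boundary regularity, obtained by blowing up to homogeneous minimizers with constant data on a half-space. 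As you noticed yourself in Step~1, smooth data do not give it directly. With the truncation lemma in hand, the direct citation is all that is needed.
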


\begin{proof}
By \cref{lem:modulus-truncation}, $u$ takes values almost everywhere in the
bounded smooth closed annulus
\[
 K_{a,2}=\{y\in\R^n:a\le|y|\le2\}.
\]
Every competitor with values in $K_{a,2}$ is also admissible for the original
exterior-ball problem. Since $u$ itself takes values in $K_{a,2}$, it is a
Dirichlet-energy minimizer in this restricted class as well. Apply
Duzaar--Fuchs \cite[Theorem~II]{DuzaarFuchs1986} with
\[
 \Omega=\B_1,
 \qquad
 M=\{y\in\R^n:a<|y|<2\},
 \qquad
 v_0(x)=x\quad\text{on }\partial\B_1.
\]
Here $\Omega$ and $M$ have smooth boundary, $u$ minimizes among maps with
values in $\overline M=K_{a,2}$, and
$v_0\in C^\infty(\partial\B_1;M)$. The theorem therefore gives a representative
of $u$ that is continuous up to $\partial\B_1$ in a full boundary
neighborhood. Since $|u|=1>a$ pointwise on $\partial\B_1$, compactness of the
boundary and continuity provide $\delta>0$ such that
\[
 |u|>a
 \quad\text{in }\B_1\setminus\overline{\B_{1-\delta}}.
\]
Both positive and negative target variations are admissible in this collar,
and hence
\[
 \Delta u=0
 \quad\text{in }\B_1\setminus\overline{\B_{1-\delta}}.
\]
Classical boundary regularity for the harmonic Dirichlet problem with the
smooth boundary datum $x$ then gives the asserted $C^\infty$ regularity after
decreasing $\delta$ if necessary. This proves \eqref{eq:collar}.
\end{proof}

\begin{proposition}\label{prop:local-consequences}
Let $u$ be any local minimizer for the exterior-ball constraint. Then:
\begin{enumerate}[label=\textup{(\roman*)}]
\item $u$ satisfies \eqref{eq:EL}, \eqref{eq:stress}, and \eqref{eq:monotonicity};
\item $\Sing u$ is relatively closed and
$u\in C^{1,1}_{\mathrm{loc}}(\Omega\setminus\Sing u)$;
\item every $p\in\Sing u$ has a neighborhood on which $|u|=a$ almost everywhere;
\item the gap function $d_u=|u|-a$ has a continuous representative on the domain.
\end{enumerate}
\end{proposition}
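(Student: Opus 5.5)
Most of this proposition is a matter of citing \cite{FGKSsing} correctly; the only part that needs a genuine argument is (iv), which I would build from (ii) and (iii). The plan is to handle the items in order (i), (ii), (iii), (iv).

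\emph{Item (i).} The Euler equation \eqref{eq:EL} is \cite[equation~(1.2)]{FGKSsing}, and the monotonicity inequality \eqref{eq:monotonicity} is \cite[Lemma~2.1]{FGKSsing}. For \eqref{eq:stress}, I would check that the derivation before \cref{prop:finite-existence} applies to a local minimizer. For $X\in C^1_c(\Omega;\R^n)$ and $|t|$ small, $\Phi_t$ is a $C^1$ diffeomorphism equal to the identity outside $\operatorname{supp}X$. Hence $u_t=u\circ\Phi_t^{-1}$ is a compactly supported perturbation of $u$ with the same image, so it is an admissible competitor. Since $t\mapsto E(u_t;\Omega)$ is differentiable and minimized at $t=0$, its derivative vanishes there, which is exactly \eqref{eq:stress}.

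\emph{Item (ii).} The set $\operatorname{Reg}u$ is open by definition, so $\Sing u$ is relatively closed. The $C^{1,1}_{\loc}$ regularity on $\operatorname{Reg}u$ is the improvement from \cite[Theorem~2.4]{FGKSsing} already recorded above. The point to verify is that a map which is merely continuous near a point $x$ takes values in a small ball of the target $\overline{\cM}_a$ near $u(x)$. Such a ball is either an interior region or a half-ball at the smooth boundary $\partial\overline{\cM}_a$. After shrinking the neighborhood, the small-energy hypothesis of the $\varepsilon$-regularity theorem then holds.

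\emph{Item (iii).} I would argue by contradiction. Let $p\in\Sing u$, and suppose every neighborhood of $p$ contains a set of positive measure where $|u|>a$. The approach is to use the structure from \cite{FGKSsing}: on the open set where $|u|$ stays away from $a$, the constraint is inactive, so $u$ is harmonic there. Near a singular point the blow-ups are homogeneous $0$-tangent maps with values in the tangent cone of the target. I would use \cite[Corollary~2.5]{FGKSsing}, whose density lower bound forces the tangent map at $p$ to be nonconstant. A nonconstant $0$-homogeneous tangent map must take values in $\{|y|=a\}$, since the only $0$-homogeneous harmonic maps into a convex region or half-space are constant. If the exact statement in \cite{FGKSsing} already says that singular points lie in the interior of the coincidence set, I would cite it directly. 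Otherwise I would propagate the conclusion from the tangent map back to $u$ near $p$ using the $C^{1,1}$ modulus estimate of \cite{FGKSsing}. This step is the main obstacle.

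\emph{Item (iv).} The gap function $d_u=|u|-a$ is continuous on $\operatorname{Reg}u$ by (ii). By (iii), each singular point $p$ has a neighborhood $V_p$ on which $d_u=0$ almost everywhere. Define $d_u:=0$ on $\Sing u$. On $V_p\cap\operatorname{Reg}u$, the continuous representative equals the a.e.\ value $0$. Therefore $d_u\equiv0$ on $V_p$, and the function defined this way is continuous on all of $\Omega$.
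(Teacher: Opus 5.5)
Your items (i), (ii) and (iv) follow the paper. In (i) and (ii) you cite \cite{FGKSsing} and use the domain-variation computation preceding \cref{prop:finite-existence}. Your gluing argument in (iv) is the paper's own. For (iii), the paper takes exactly your first branch: it cites the coincidence-neighborhood consequence of \cite[Theorem~1.1]{FGKSsing}, i.e.\ the separation of the singular set from the free boundary proved there, and you should do the same.

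A smaller point in (ii): the half-ball picture does not by itself give small normalized energy for a continuous minimizer. What does is a Caccioppoli estimate. Test \eqref{eq:EL} with $\eta^2(u-u(x))$ and use $u\cdot(u-u(x))\le\frac12|u-u(x)|^2$ on $\{|u|=a\}$. This gives $r^{2-n}\int_{\B_{r/2}(x)}|Du|^2\le C\delta^2$ with $\delta=\sup_{\B_r(x)}|u-u(x)|$, once $\delta$ is small. The paper simply cites this step.

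Your fallback for (iii), however, has a genuine gap and should be dropped.

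\emph{The tangent-map step.} Its conclusion is true, but not for the reason you give. A tangent map $\Phi$ at $p$ is a $0$-homogeneous constraint map into the complement of a convex set. A priori it may have both a contact cone and a noncontact cone, so no Liouville theorem for harmonic maps into convex regions applies. A correct argument: \eqref{eq:EL} gives $\Delta|\Phi|^2=2|D\Phi|^2\1_{\{|\Phi|>a\}}\ge0$, and a $0$-homogeneous subharmonic function is constant. Hence $\Phi$ is either constant or sphere-valued, and the density bound excludes the former.

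\emph{The propagation step.} This is where the argument fails. Strong $W^{1,2}_{\loc}$ convergence of $u(p+r\,\cdot)$ to a sphere-valued $\Phi$ only says that the gap $|u|-a$ is small in a scale-invariant averaged sense on $\B_r(p)$. It does not prevent thin noncontact components, and hence free-boundary points, from accumulating at $p$.

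The $C^{1,1}$ estimates of \cite[Theorem~2.4]{FGKSsing} cannot close this gap. They require normalized energy below $\eps_0^2$, which fails on every ball centred at $p$ by \cite[Corollary~2.5]{FGKSsing}. So their constants degenerate precisely as one approaches $p$.

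Excluding such accumulation needs further input. Examples are a nondegeneracy estimate for the gap at free-boundary points, and a separate treatment of free-boundary points converging to singular points of $\Phi$ itself. This is the actual content of \cite[Theorem~1.1]{FGKSsing}, not a consequence of the density lower bound.
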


\begin{proof}
Part (i) combines the Euler equation
\cite[equation~(1.2)]{FGKSsing}, the domain-stationarity computation
\eqref{eq:stress}, and the monotonicity formula
\cite[Lemma~2.1]{FGKSsing}. Part (ii) follows from the epsilon-regularity
theorem and the resulting singular-density lower bound
\cite[Theorem~2.4 and Corollary~2.5]{FGKSsing}, and part (iii) from the
coincidence-neighborhood consequence of \cite[Theorem~1.1]{FGKSsing}. For
(iv), let $p$ be arbitrary. If
$p\notin\Sing u$, then $u$ is continuous near $p$, so $d_u$ is continuous
there. If $p\in\Sing u$, the same coincidence-neighborhood result gives a ball
on which $|u|=a$ almost everywhere; define $d_u=0$ on that ball. On the
regular portion of the same ball, continuity of $u$ upgrades the
almost-everywhere equality to pointwise equality. These local representatives
agree on overlaps and provide a continuous representative of $d_u$.  In what
follows, the coincidence set is the closed set $\{d_u=0\}$ and the
noncontact set is the open set $\{d_u>0\}$.
\end{proof}

For a radial map $u(x)=w(r)e_r$, where $r=|x|$ and $e_r=x/r$, one has
\begin{equation}\label{eq:radial-gradient}
 |Du|^2=(w')^2+\frac{n-1}{r^2}w^2.
\end{equation}
Indeed, $D e_r=r^{-1}(I-e_r\otimes e_r)$, the radial and tangential pieces are orthogonal, and $|I-e_r\otimes e_r|^2=n-1$.

On a noncontact interval, the vector equation $\Delta(w e_r)=0$ becomes
\begin{equation}\label{eq:radial-ode}
 w''+\frac{n-1}{r}w'-\frac{n-1}{r^2}w=0.
\end{equation}
Its solutions are $c_1r+c_2r^{1-n}$.  On the contact interval the modulus is
constant, $w\equiv a$, so its radial derivative at the interface is zero.
The natural $C^1$ matching condition with the outer harmonic branch is
therefore $w'(R)=0$; equivalently, this condition prevents a jump of the
normal derivative, and hence a surface term in the distributional equation,
across $\partial\B_R$.  Imposing
\[
 w(R)=a,
 \qquad
 w'(R)=0,
 \qquad
 w(1)=1
\]
gives precisely \eqref{eq:aR} and \eqref{eq:radial-map}.

We next record the radial calculations that will be used in the parameter
argument.  For $r\ge R$,
\[
 w_R'(r)=\frac{n-1}{n-1+R^n}\bigl(1-R^nr^{-n}\bigr)\ge0.
\]
Thus $w_R(r)\ge w_R(R)=a$.  On $\B_R$, the map is
$a\frac{x}{|x|}$; since $n\ge3$, its energy density
$(n-1)a^2r^{-2}$ is integrable at the origin.  Hence
$u_{a,n}\in\cA_a$.

Set
\[
 M_n(a)=E(u_{a,n};\B_1),
 \qquad
 \beta_R=w_R'(1).
\]
Differentiating the outer expression for $w_R$ and using \eqref{eq:aR}
gives
\begin{equation}\label{eq:betaR}
 \beta_R
 =\frac{(n-1)(1-R^n)}{n-1+R^n}
 =1-aR^{n-1}.
\end{equation}
The same relation also yields
\[
 \beta_R+n-1=\frac{(n-1)a}{R}.
\]

The energy on the contact ball follows directly from
\eqref{eq:radial-gradient}:
\begin{equation}\label{eq:Crad}
 C_{\rm rad}:=E(u_{a,n};\B_R)
 =\frac{(n-1)\sigma_{n-1}}{n-2}a^2R^{n-2}.
\end{equation}
On the noncontact annulus, $u_{a,n}$ is harmonic.  Integration by parts,
together with $w_R'(R)=0$, $u_{a,n}=x$ on $\partial\B_1$, and
$\partial_\nu u_{a,n}=\beta_Rx$ there, gives
\begin{equation}\label{eq:Eouter}
 E(u_{a,n};\B_1\setminus\B_R)=\sigma_{n-1}\beta_R.
\end{equation}
Combining \eqref{eq:Crad}, \eqref{eq:Eouter}, \eqref{eq:aR}, and
\eqref{eq:betaR}, we obtain the energy relation
\begin{equation}\label{eq:radial-beta-energy}
 \beta_R^2=(n-1)-\frac{n-2}{\sigma_{n-1}}M_n(a).
\end{equation}

For the later parameter variation, differentiate along the parameter $R$.
Equation \eqref{eq:aR} gives
\[
 \frac{\dd a}{\dd R}
 =\frac{n(n-1)(1-R^n)}{(n-1+R^n)^2}>0.
\]
Differentiating $M_n=C_{\rm rad}+\sigma_{n-1}\beta_R$ and dividing by
$\dd a/\dd R$ yields
\begin{equation}\label{eq:Mprime}
 M_n'(a)=\frac{2C_{\rm rad}}a
 =\frac{2(n-1)\sigma_{n-1}}{n-2}aR^{n-2}.
\end{equation}
Finally, the two identities
$1-\beta_R=aR^{n-1}$ and
$\beta_R+n-1=(n-1)a/R$ give
\begin{equation}\label{eq:radial-Psi}
 \frac{(1-\beta_R)(\beta_R+n-1)^{n-1}}{a^n}
 =(n-1)^{n-1}.
\end{equation}

Define
\begin{equation}\label{eq:rho-kappa}
 \rho_n=\frac{n-1}{n},
 \qquad
 \theta_n=\frac{(n-1)\sigma_{n-1}}{n-2},
 \qquad
 \kappa_n=\frac{2\sigma_{n-1}}{n-2}\rho_n^{n-1}.
\end{equation}

We conclude the radial calculation by recording its small-$a$ expansion.
Rearranging \eqref{eq:aR},
\[
 R=\rho_na+\frac{a}{n}R^n.
\]
Since $R=O(a)$, it follows that
\[
 R(a)=\rho_na+O(a^{n+1}).
\]
By \eqref{eq:Crad}, \eqref{eq:Eouter}, and \eqref{eq:betaR},
\[
 M_n(a)-\sigma_{n-1}
 =\theta_na^2R^{n-2}-\sigma_{n-1}aR^{n-1}.
\]
Substituting the expansion of $R(a)$, the coefficient of $a^n$ is
\begin{align*}
 \theta_n\rho_n^{n-2}-\sigma_{n-1}\rho_n^{n-1}
 &=\sigma_{n-1}\rho_n^{n-2}
 \left(\frac{n-1}{n-2}-\rho_n\right)\\
 &=\frac{2\sigma_{n-1}}{n-2}\rho_n^{n-1}
 =\kappa_n.
\end{align*}
The error in $R$ is $O(a^{n+1})$, so its contribution to both terms is
$O(a^{2n})$. Therefore
\begin{equation}\label{eq:M-expansion}
 M_n(a)=\sigma_{n-1}+\kappa_na^n+O(a^{2n})
 \qquad(a\downarrow0).
\end{equation}

\section{The sharp infinite-cell problem}\label{sec:cell}

The proof of \cref{thm:main} begins in this section.  Since the radial map is
admissible, it already gives $m_n(a)\le M_n(a)$.  To obtain the reverse
inequality, we first determine the exact energetic cost of a small obstacle.
This is encoded by the infinite-cell problem below.  The main result of the
section is the exact cell constant in \cref{thm:sharp-cell}; it will determine
the endpoint asymptotic in \cref{sec:global-minimality}.

Let $D^{1,2}(\R^n;\R^n)$ denote the completion of
$C_c^\infty(\R^n;\R^n)$ in the norm $\|DZ\|_{L^2}$. Since $n\ge3$, the
Sobolev inequality identifies this completion with
\[
 \bigl\{Z\in L^{2^*}(\R^n;\R^n):DZ\in L^2(\R^n)\bigr\},
\]
where
\[
 2^*=\frac{2n}{n-2}.
\]
More precisely,
\begin{equation}\label{eq:sobolev-D12}
 \|Z\|_{L^{2^*}(\R^n)}\le C_n\|DZ\|_{L^2(\R^n)}.
\end{equation}
Define
\begin{align}
 c_{\infty,n}
 &=\inf\Bigl\{\cC_n(U):U=x+Z,\ Z\in D^{1,2}(\R^n;\R^n),\ |U|\ge1\text{ a.e.}\Bigr\},\label{eq:cinfty}\\
 \cC_n(U)&=\int_{\R^n}|D(U-x)|^2\dd x=\int_{\R^n}|DZ|^2\dd x.\label{eq:cell-energy}
\end{align}

\subsection{Cell minimizers and their behavior at infinity}

Set $\rho=\rho_n$ and
\[
 b_n=\frac{\rho^n}{n-1}.
\]
Define
\begin{equation}\label{eq:radial-cell}
 U_n^*(x)=
 \begin{cases}
 \displaystyle\frac{x}{|x|},&0<|x|\le\rho,\\[2mm]
 \displaystyle x+b_n\frac{x}{|x|^n},&|x|>\rho.
 \end{cases}
\end{equation}
The value at the origin may be chosen arbitrarily; it has no effect on the
Sobolev class or the energy.

A direct calculation shows that $U_n^*$ is admissible for
\eqref{eq:cinfty}, is $C^1$ across $\partial\B_\rho$, and satisfies
\begin{equation}\label{eq:cell-kappa}
 \cC_n(U_n^*)=\kappa_n.
\end{equation}
Indeed, for $r>\rho$, the modulus is
\[
 f(r)=r+b_nr^{1-n}.
\]
Since
\[
 f'(r)=1-(n-1)b_nr^{-n}=1-\rho^nr^{-n},
\]
$f$ has its minimum at $r=\rho$, and
\[
 f(\rho)=\rho+\frac{\rho}{n-1}=1.
\]
The inner modulus is also $1$, so $|U_n^*|\ge1$. At $r=\rho$, both the radial derivative and the tangential derivative match: the radial derivative is $0$, while the tangential factor is $1/\rho$. Thus $U_n^*$ is $C^1$ across the interface.

Put $Z_n^*=U_n^*-x$. Near the origin, $|DZ_n^*|\le C(1+r^{-1})$, which is square integrable because $n\ge3$; moreover $Z_n^*$ is bounded there. At infinity,
\[
 |DZ_n^*|\le Cr^{-n},
 \qquad
 |Z_n^*|=b_nr^{1-n}.
\]
Consequently $DZ_n^*\in L^2(\R^n)$ and $Z_n^*\in L^{2^*}(\R^n)$. To verify
the completion definition, choose radial cutoffs $\chi_R$ such that
$\chi_R=1$ on $\B_R$, $\chi_R=0$ outside $\B_{2R}$, and
$|D\chi_R|\le C/R$.  Then
\[
 D(\chi_RZ_n^*)-DZ_n^*
 =(\chi_R-1)DZ_n^*+Z_n^*\otimes D\chi_R.
\]
The $L^2$ norm of the first term tends to zero because
$DZ_n^*\in L^2$.  For the second term, H\"older's inequality and the
relation $1-2/2^*=2/n$ give
\[
 \int|Z_n^*|^2|D\chi_R|^2
 \le C\left(\int_{\{R<|x|<2R\}}|Z_n^*|^{2^*}\dd x\right)^{2/2^*}
 \longrightarrow0,
\]
where the factor $R^{-2}$ from $|D\chi_R|^2$ is cancelled by the
$R^2$ supplied by the measure of the annulus.  Thus
$\chi_RZ_n^*\to Z_n^*$ in the gradient norm.  Finally, each compactly
supported $W^{1,2}$ map $\chi_RZ_n^*$ can be mollified in $W^{1,2}$, proving
$Z_n^*\in D^{1,2}$ under the completion definition fixed above.

We compute the energy. In $\B_\rho$,
\[
 Z_n^*=e_r-x,
 \qquad
 DZ_n^*=\frac1r(I-e_r\otimes e_r)-I.
\]
Therefore
\begin{align*}
 \int_{\B_\rho}|DZ_n^*|^2
 &=\sigma_{n-1}\int_0^\rho
 \left(\frac{n-1}{r^2}-\frac{2(n-1)}r+n\right)r^{n-1}\dd r\\
 &=\sigma_{n-1}\left[
 \frac{n-1}{n-2}\rho^{n-2}-2\rho^{n-1}+\rho^n
 \right].
\end{align*}
For $r>\rho$,
\[
 DZ_n^*=b_nr^{-n}(I-n e_r\otimes e_r).
\]
The matrix has $n-1$ tangential eigenvalues $1$ and radial eigenvalue $1-n$, so
$|I-ne_r\otimes e_r|^2=n(n-1)$. Hence
\begin{align*}
 \int_{\R^n\setminus\B_\rho}|DZ_n^*|^2
 &=\sigma_{n-1}n(n-1)b_n^2\int_\rho^\infty r^{-n-1}\dd r\\
 &=\sigma_{n-1}(n-1)b_n^2\rho^{-n}
 =\frac{\sigma_{n-1}}{n-1}\rho^n.
\end{align*}
Adding the two pieces and using $n\rho=n-1$ gives
\begin{align*}
 \cC_n(U_n^*)
 &=\sigma_{n-1}\rho^{n-2}
 \left(\frac{n-1}{n-2}-\rho\right)\\
 &=\frac{2\sigma_{n-1}}{n-2}\rho^{n-1}
 =\kappa_n.
\end{align*}

\begin{proposition}\label{prop:cell-structure}
The infimum $c_{\infty,n}$ is attained. Every cell minimizer $U=x+Z$ has
the following properties.
\begin{enumerate}[label=\textup{(\roman*)}]
\item If $\Omega\Subset\R^n$ and $V-U\in W_0^{1,2}(\Omega;\R^n)$, with
$|V|\ge1$ almost everywhere, then
\[
 E(U;\Omega)\le E(V;\Omega).
\]
\item There exists $R_0>0$ such that $U$ is regular on
$\R^n\setminus\B_{R_0}$, the coincidence set is contained in $\B_{R_0}$,
and $Z$ is harmonic on $\R^n\setminus\overline{\B_{R_0}}$.
\item As $R\to\infty$,
\begin{equation}\label{eq:Z-decay}
 \sup_{|x|=R}|Z(x)|=o\bigl(R^{-(n-2)/2}\bigr).
\end{equation}
Consequently, for every sufficiently large $R$,
\begin{equation}\label{eq:outer-degree}
 \degmap\left(\left.\frac{U}{|U|}\right|_{\partial\B_R}\right)=1,
\end{equation}
and $U$ has at least one mapping singularity.
\end{enumerate}
\end{proposition}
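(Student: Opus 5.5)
Existence will follow from the direct method, as in \cref{prop:finite-existence}. Take a minimizing sequence $U_j=x+Z_j$ with $\|DZ_j\|_{L^2}$ bounded. By \eqref{eq:sobolev-D12}, after passing to a subsequence we get $Z_j\weakto Z$ in $D^{1,2}$ and in $L^{2^*}$. Rellich on each ball $\B_k$ plus a diagonal argument gives $Z_j\to Z$ a.e. Hence $|U|\ge1$ a.e., and weak lower semicontinuity of $\int|DZ|^2$ shows that $U=x+Z$ is a minimizer.

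For (i), let $V-U\in W^{1,2}_0(\Omega)$ with $|V|\ge1$. Then $V-x=Z+(V-U)\in D^{1,2}$, so $V$ is a competitor. Expanding $|D(V-x)|^2-|D(U-x)|^2$ gives
\[
 \int_\Omega\bigl(|DV|^2-|DU|^2\bigr)\dd x-2\int_\Omega D(V-U):I\dd x .
\]
The second term equals $-2\int_\Omega\operatorname{div}(V-U)\dd x=0$, since $V-U$ has zero trace. So $\cC_n(V)-\cC_n(U)=E(V;\Omega)-E(U;\Omega)\ge0$. In particular $U$ is a local minimizer, and \cref{prop:local-consequences} applies to it.

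For (ii), I would prove that $|U|>1$ outside a large ball by an $\varepsilon$-regularity argument at infinity. Fix $x_0$ with $|x_0|=2R$. Then $\int_{\B_R(x_0)}|DZ|^2\to0$ as $R\to\infty$, since these balls lie outside $\B_R$ and $DZ\in L^2$. Also $R^{-n}\int_{\B_R(x_0)}|Z|^2\le C\bigl(\int_{\B_R(x_0)}|Z|^{2^*}\bigr)^{2/2^*}\to0$ by H\"older. On $\B_R(x_0)$ the affine part $x$ has modulus between $R$ and $3R$. So $U$ is $L^2$-close, in the scaled sense, to an affine map of modulus $\gtrsim R\gg1$.

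To convert this into a pointwise bound, I would rescale $\tilde U(y)=R^{-1}U(x_0+Ry)$. This is a minimizer for the obstacle $\{|y|\ge1/R\}$, and it is close in $W^{1,2}(\B_1)$ to the affine map $y+x_0/R$, whose modulus on $\B_1$ is at least $1$. If I can show $|\tilde U|>1/R$ on $\B_{1/2}$, then $\tilde U$ is harmonic there and standard interior estimates give uniform closeness. The main obstacle is ruling out coincidence in this way. I would try first the small-energy $\varepsilon$-regularity of \cite[Theorem~2.4]{FGKSsing} applied to $U-x$ on unit balls, combined with the uniform smallness of the normalized energy $\cE(U-x,\cdot,r)$. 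An alternative is a comparison showing that, away from the target's hole, a subharmonicity argument applies: $|U|^2$ is subharmonic wherever $U$ is harmonic, and $\Delta|U|^2\ge0$ holds everywhere by \eqref{eq:EL}, since $\Delta|U|^2=2|DU|^2-2|DU|^2\1_{\{|U|=1\}}\ge0$. With either route, mean-value smallness of $Z$ on $\B_{R/2}(x_0)$, which follows from the $L^{2^*}$ smallness via harmonicity, forces $|U|\ge R-o(1)>1$. This is circular unless harmonicity is known first, so I would use the $\varepsilon$-regularity route to get continuity first and then bootstrap.

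Once $|U|>1$ for $|x|\ge R_0$, $U$ is harmonic there, hence so is $Z$. For (iii), $Z$ is harmonic in the exterior domain and lies in $L^{2^*}$. Mean-value on balls $\B_{|x|/2}(x)$ gives
\[
 |Z(x)|\le C|x|^{-n/2^*}\|Z\|_{L^{2^*}(\B_{|x|/2}(x))}=o(|x|^{-(n-2)/2}).
\]
Then $|Z|<|x|$ on $\partial\B_R$ for large $R$, so $U/|U|$ is homotopic to $x/|x|$ on $\partial\B_R$ through the nonvanishing maps $x+tZ$, giving degree one. If $U$ had no singularity, $U$ would be continuous on $\overline{\B_R}$ with $|U|\ge1$ (by \cref{prop:local-consequences}(iv) on the regular set). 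Then $U/|U|$ would extend continuously to the ball, forcing degree zero, a contradiction.
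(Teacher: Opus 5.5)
Your existence argument, part (i), and part (iii) are correct and agree with the paper. In (iii) this covers the $L^{2^*}$ mean-value bound on $\B_{|x|/2}(x)$, the homotopy through $x+tZ$, and the degree contradiction. The gap is in part (ii), which you leave open and on which (iii) depends.

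The $\varepsilon$-regularity theorem \cite[Theorem~2.4]{FGKSsing} applies to the constraint minimizer $U$. It does not apply to $U-x$, which minimizes no constrained problem. Moreover, the normalized energy of $U$ is not small at the scales you propose. On unit balls, $\cE(U,x_0,1)\to n|\B_1|=\sigma_{n-1}$ as $|x_0|\to\infty$. Your rescaled map $\widetilde U(y)=R^{-1}U(x_0+Ry)$ likewise has $\int_{\B_1}|D\widetilde U|^2\dd y\to\sigma_{n-1}$. In both cases the affine part contributes a fixed constant that cannot be made small.

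$W^{1,2}$-closeness of $\widetilde U$ to an affine map gives no pointwise control when $n\ge3$, so it cannot exclude thin spikes of $U$ reaching the obstacle. The subharmonicity route fails for a more basic reason: $\Delta|U|^2\ge0$ bounds $|U|$ from \emph{above} by averages, while you need the lower bound $|U|>1$. Finally, mere continuity would not make your ``bootstrap'' work. You need a modulus of continuity that is uniform in $x_0$.

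The missing idea is to work at a fixed small scale. Since $DU=I+DZ$,
\[
 \cE(U,x_0,2r_0)\le 2(2r_0)^{2-n}\int_{\B_{2r_0}(x_0)}|DZ|^2\dd x+2n|\B_1|(2r_0)^2 .
\]
First fix $r_0$ with $2n|\B_1|(2r_0)^2\le\eps_0^2/2$. Then use $DZ\in L^2(\R^n)$ to make the first term at most $\eps_0^2/2$ for all large $|x_0|$. This yields regularity outside a large ball, together with $C^{1,1}$ bounds on $\B_{r_0}(x_0)$ that are uniform in $x_0$.

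The uniform Lipschitz constant $L$ is what converts the integrability of $Z$ into the exclusion of contact. Suppose contact points $x_k\to\infty$ existed. Then $|U(x_k)|=1$ gives $|U|\le1+L\delta$ on $\B_\delta(x_k)$, hence $|Z|\ge|x_k|/2$ there for large $k$. Therefore $\int_{\B_\delta(x_k)}|Z|^{2^*}\dd x\ge c\,\delta^n|x_k|^{2^*}\to\infty$, contradicting $Z\in L^{2^*}$.

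Only after this are two-sided variations available, giving $\Delta Z=0$ outside $\B_{R_0}$. Your argument for (iii) then applies verbatim.
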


\begin{proof}
Let $U_j=x+Z_j$ be a minimizing sequence. Then $(DZ_j)$ is bounded in $L^2$. By \eqref{eq:sobolev-D12}, $(Z_j)$ is bounded in $L^{2^*}$. Passing to a subsequence,
\[
 Z_j\weakto Z\quad\text{in }D^{1,2},
 \qquad
 Z_j\weakto Z\quad\text{in }L^{2^*}.
\]
For each fixed $R$, H\"older's inequality controls $\|Z_j\|_{L^2(\B_R)}$ by the $L^{2^*}$ norm, so $(Z_j)$ is bounded in $W^{1,2}(\B_R)$. A diagonal Rellich argument gives
\[
 Z_j\to Z\quad\text{strongly in }L^2_{\loc}(\R^n)
\]
and almost everywhere. Consequently,
\[
 |x+Z(x)|=\lim_{j\to\infty}|x+Z_j(x)|\ge1
\quad\text{for a.e. }x.
\]
Thus $U=x+Z$ is admissible. Weak lower semicontinuity yields
\[
 \cC_n(U)\le\liminf_{j\to\infty}\cC_n(U_j)=c_{\infty,n}.
\]

We first prove local minimality. Let $U=x+Z$ be a cell minimizer, and let
$V$ be as in part~(i).
Extend $W=V-U$ by zero outside $\Omega$. Then $Z+W\in D^{1,2}$, and the patched map is globally admissible. Since $W\in W_0^{1,2}(\Omega)$,
\[
 \int_\Omega I:DW\dd x=\int_\Omega\operatorname{div}W\dd x=0.
\]
Therefore
\begin{align*}
 \int_\Omega|D(V-x)|^2-|D(U-x)|^2
 &=\int_\Omega|DV|^2-|DU|^2-2\int_\Omega I:DW\\
 &=E(V;\Omega)-E(U;\Omega).
\end{align*}
Global cell minimality makes the left side nonnegative.

Monotonicity ensures the existence of the density
\[
 \Theta(U,x)=\lim_{r\downarrow0}\cE(U,x,r).
\]
Let $\eps_0>0$ denote the constant in the epsilon-regularity theorem
\cite[Theorem~2.4]{FGKSsing}.  In the present normalization, that theorem and
\cite[Corollary~2.5]{FGKSsing} give
\[
 \cE(U,x,2r)\le\eps_0^2
 \quad\Longrightarrow\quad U\in C^{1,1}(\B_r(x)),
 \qquad
 x\in\Sing U\quad\Longrightarrow\quad\Theta(U,x)\ge\eps_0^2.
\]

We next study the behavior of a cell minimizer at infinity. Fix a radius
$r_0>0$, to be chosen. By $DU=I+DZ$ and
$|A+B|^2\le2|A|^2+2|B|^2$,
\begin{align}
 \cE(U,x_0,2r_0)
 &\le 2(2r_0)^{2-n}\int_{\B_{2r_0}(x_0)}|DZ|^2\dd x
 +2n|\B_1|(2r_0)^2.\label{eq:far-energy}
\end{align}
Choose $r_0$ so small that the second term is at most $\eps_0^2/2$. Since $DZ\in L^2(\R^n)$,
\[
 \sup_{|x_0|\ge R}
 \int_{\B_{2r_0}(x_0)}|DZ|^2\dd x
 \le
 \int_{\{|x|\ge R-2r_0\}}|DZ|^2\dd x\longrightarrow0.
\]
Thus \eqref{eq:far-energy} is below $\eps_0^2$ for all sufficiently large
$|x_0|$. The estimate in \cite[Theorem~2.4]{FGKSsing} then gives, on the
smaller balls $\B_{r_0}(x_0)$, uniform bounds for $DU$ and $D^2U$ depending
only on $n$ and $r_0$, and not on $x_0$.  In particular, these estimates give
a uniform Lipschitz modulus there, and $U$ is regular outside a sufficiently
large ball.  This proves the first assertion in part~(ii).

Assume by contradiction that there are contact points $x_k$ with
$|x_k|\to\infty$. For all sufficiently large $k$, the map $U$ is regular
near $x_k$ by the preceding argument; for the continuous representative
fixed in \cref{prop:local-consequences}, the definition of the coincidence
set therefore gives $|U(x_k)|=1$. Taking a fixed
$0<\delta<r_0$ and using the preceding uniform gradient bound, we obtain a
constant $L<\infty$, independent of $k$, such that
\[
 |U(y)|\le1+L\delta
 \qquad\text{for }y\in\B_\delta(x_k).
\]
For sufficiently large $k$ and every such $y$,
\[
 |Z(y)|=|U(y)-y|
 \ge |y|-|U(y)|
 \ge |x_k|-\delta-(1+L\delta)
 \ge\frac{|x_k|}{2}.
\]
Hence
\[
 \int_{\B_\delta(x_k)}|Z|^{2^*}\dd x
 \ge c\delta^n|x_k|^{2^*}\longrightarrow\infty,
\]
contradicting $Z\in L^{2^*}(\R^n)$. Thus the contact set is bounded.

Outside a larger ball, $|U|>1$ and $U$ is regular. Two-sided variations are admissible there, so $\Delta U=0$. Since $\Delta x=0$, also $\Delta Z=0$.

It remains to establish the decay estimate and the degree assertion. By
part~(ii), each component of $Z$ is harmonic outside $\B_{R_0}$. Let $R$
be large and $x\in\partial\B_R$. The ball $\B_{R/4}(x)$ lies in the
harmonic region. The interior $L^p$ mean-value estimate, with $p=2^*$, gives
\begin{align*}
 |Z(x)|
 &\le C R^{-n/2^*}
 \|Z\|_{L^{2^*}(\B_{R/4}(x))}\\
 &\le C R^{-(n-2)/2}
 \|Z\|_{L^{2^*}(\{|y|>R/2\})}.
\end{align*}
The tail norm tends to zero, proving \eqref{eq:Z-decay}.

In particular, $|Z(x)|<|x|$ on $\partial\B_R$ for large $R$. The homotopy
\[
 H_t(x)=\frac{x+tZ(x)}{|x+tZ(x)|},\qquad 0\le t\le1,
\]
is well defined on $\partial\B_R$ and connects
$\frac{x}{|x|}$ to $\frac{U}{|U|}$. Thus the degree is one.

If $U$ had no mapping singularity, then regularity theory would make $U$
continuous on $\overline{\B_R}$. Because $|U|\ge1$, the normalized map
$\frac{U}{|U|}$ would continuously extend the boundary map to $\B_R$,
forcing its boundary degree to be zero. This contradicts
\eqref{eq:outer-degree}.
\end{proof}

\subsection{Singularities and the sharp density bound}\label{sec:singular}

Let $U$ be a cell minimizer.  The coincidence-neighborhood consequence of
\cite[Theorem~1.1]{FGKSsing}, after rescaling the obstacle radius to one,
states that every $p\in\Sing U$ has a ball on which $|U|=1$ almost everywhere.
Every sphere-valued competitor in that ball is also an admissible
exterior-ball competitor. Therefore $U$ minimizes the ordinary sphere-valued
Dirichlet energy there and is, in particular, a stable stationary harmonic
map into $\Sph^{n-1}$.

\begin{proposition}\label{prop:sharp-density}
For every $n\in\{3,4,5,6\}$ and every cell minimizer $U$, there exists
$p\in\Sing U$ such that
\begin{equation}\label{eq:sharp-density-point}
 \Theta(U,p)\ge\theta_n
 =\frac{(n-1)\sigma_{n-1}}{n-2}.
\end{equation}
\end{proposition}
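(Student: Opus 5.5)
The plan follows the introduction's outline. The argument has four steps: reduce to a singular point carrying nonzero local degree, pass to a tangent map, identify that tangent map with a degree-$k$ cone, and bound its energy from below.

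\textbf{Step 1: localize the degree at a singular point.} By \cref{prop:cell-structure}(iii), $U/|U|$ has degree one on $\partial\B_R$ for large $R$, and $\Sing U\subset\B_{R_0}$ is compact. Near each singular point, $|U|=1$ a.e. and $U$ minimizes sphere-valued energy. In the regular set $|U|\ge1$ holds pointwise, so $U/|U|$ is continuous on $\R^n\setminus\Sing U$.

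For $n=3$, singularities of minimizing maps into $\Sph^2$ are isolated; this is the sphere-valued regularity of Schoen--Uhlenbeck, applied locally. Hence $\Sing U$ is finite, and the degree one at infinity equals the sum of the local degrees $\deg(U/|U|;\partial\B_r(p))$. So some $p$ has local degree $d_p\neq0$.

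For $4\le n\le6$, I would use the Lin--Wang bound $\Hdim\Sing U\le n-3$. This bound makes $\Sing U$ have zero $(n-2)$-capacity, in fact zero $\mathcal H^{n-2}$, so it does not separate spheres. A degree-one map on $\partial\B_R$ then cannot be extended continuously over the complement of $\Sing U$ unless some small sphere around a singular point carries nonzero degree. I would make this precise by covering $\Sing U$ with finitely many small balls and summing degrees. The difficulty is that $\Sing U$ may be a continuum, so I would instead work with generic small spheres $\partial\B_r(p)$ that avoid $\Sing U$. Such spheres exist for a.e. $r$, since the projection $x\mapsto|x-p|$ of a set with $\mathcal H^{n-3}$-finite-type dimension has null image. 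Their degrees are well defined and constant in $r$ on intervals. A pigeonhole or blow-up selection then yields a point $p$ where degree is carried at all small scales.

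\textbf{Step 2: pass to a tangent map.} Blow up at $p$: $U_\lambda(x)=U(p+\lambda x)$. By monotonicity \eqref{eq:monotonicity} and compactness of minimizing sphere-valued maps (Luckhaus), a subsequence converges strongly to a homogeneous minimizing tangent map $\phi(x)=\omega(x/|x|)$, with
\[
\Theta(U,p)=\int_{\B_1}|D\phi|^2=\frac{1}{n-2}\int_{\Sph^{n-1}}|\nabla_T\omega|^2 .
\]
Strong $W^{1,2}$ convergence on a.e. sphere preserves the degree, because degree is continuous under $W^{1,n-1}$ convergence on $\Sph^{n-1}$. Here I would use slices where the convergence holds in $W^{1,2}$ together with the fact that $\Sing\phi$ is small. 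This gives $\deg\omega=d\neq0$, perhaps after iterating the blow-up to reach a point where $\omega$ is smooth off a lower-dimensional set.

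\textbf{Step 3: for $n=3$, classify the tangent map.} Brezis--Coron--Lieb show that the tangent maps are $\pm R x/|x|$. Hence $\Theta=\int_{\B_1}|Dx/|x||^2=2\sigma_2=\theta_3$.

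\textbf{Step 4: for $4\le n\le6$, a Poincar\'e argument.} $\omega$ is a harmonic map $\Sph^{n-1}\to\Sph^{n-1}$ of degree $d\neq0$. Conformal balancing, a Möbius precomposition $g$, gives $\int\omega\circ g\,\dd\sigma=0$. This is applied to an energy-minimizing competitor, or used alongside Ramanathan's theorem controlling stable tangent maps, so that the conformal invariance of the $(n-1)$-energy handles the change of variables. The Poincar\'e inequality on $\Sph^{n-1}$ with first eigenvalue $n-1$ then gives
\[
\int|\nabla\omega|^2\ge(n-1)\int|\omega|^2=(n-1)\sigma_{n-1},
\]
which yields $\Theta\ge\theta_n$.

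\textbf{Main obstacle.} The hardest part is that the Dirichlet energy in dimension $n-1\ge3$ is not conformally invariant, so balancing cannot be applied naively. I would first try to use Ramanathan's theorem to show that the minimizing tangent map $\omega$ is itself balanced, meaning centered with mean zero. This should follow from first-variation stability along the conformal vector fields $\nabla(x\cdot v)$, which in low dimensions $n\le6$ forces $\int\omega=0$ or else reduces to the equator case. Applying Poincar\'e to $\omega$ directly then avoids the change of variables altogether.
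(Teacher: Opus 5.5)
\textbf{Verdict: there are genuine gaps.} Your treatment of $n=3$ is fine: Schoen--Uhlenbeck isolation plus the Brezis--Coron--Lieb classification works. In fact that classification gives density $8\pi=\theta_3$ at \emph{every} singular point, so no degree selection is needed there. For $4\le n\le6$, however, Steps 1, 2 and 4 do not close.

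\textbf{Steps 1 and 2: the dimension bound is too weak.} The bound $\Hdim\Sing U\le n-3$ you invoke breaks Step~1. A set of dimension $n-3\ge1$ can have image of positive measure under $x\mapsto|x-p|$; a singular segment through $p$ does. So generic spheres need not avoid $\Sing U$. Even when they do, the ``pigeonhole or blow-up selection'' does not produce a point $p$ at which $\degmap(U/|U|;\partial\B_r(p))$ is defined and nonzero for all small $r$. Step~2 has a similar problem: strong $W^{1,2}$ convergence does not control degree on $\Sph^{n-1}$ when $n-1\ge3$.

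The missing input is the sharp Lin--Wang bound $\Hdim\Sing U\le n-\widehat d(n-1)-1=0$ for $(n,n-1)=(4,3),(5,4),(6,5)$, used twice.
\begin{enumerate}
\item Suppose singular points $p_j$ accumulate at $p$. Blowing up at $p$ at the scales $|p_j-p|$ gives a zero-homogeneous limit. By upper semicontinuity of the density it has a singular point $q\in\Sph^{n-1}$, hence a whole singular ray, contradicting dimension zero. So $\Sing U$ is finite, and the local degrees sum to $1$.
\item At a point with $d_p\ne0$, the tangent map has no singularity off the origin, again because there are no singular rays. Then $\varepsilon$-regularity upgrades the convergence to uniform convergence on $\{1/2\le|x|\le2\}$, and uniform convergence transfers the degree.
\end{enumerate}

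\textbf{Step 4: the balancing step fails.} The inequality $\int|\nabla\omega|^2\ge(n-1)\int|\omega|^2$ requires $\int_{\Sph^{n-1}}\omega=0$, and your route to that is not valid.
\begin{itemize}
\item For a harmonic $\omega$, the first variation of $t\mapsto E(\omega\circ\gamma_t)$ along conformal fields vanishes identically. The only identity it yields is $\int_{\Sph^{n-1}}|\nabla\omega|^2x\dd S=0$, which balances the energy density, not the map.
\item For $m=n-1\ge3$, moving along the conformal orbit can only \emph{lower} the energy of a harmonic map. So there is no stability in those directions to exploit.
\item Minimality of $\Phi$ on $\B_1$ only bounds $E(\Phi)$ from above by competitors, which is the wrong direction for a lower bound.
\end{itemize}

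\textbf{What fixes Step 4.} The missing ingredient is Ramanathan's theorem in its precise form: for a harmonic $\varphi:\Sph^m\to\Sph^m$ with $m\ge3$, one has $E(\varphi)\ge E(\varphi\circ\gamma)$ for every conformal $\gamma$ in the identity component. Then:
\begin{enumerate}
\item Choose $\gamma=\Gamma_{tp}$ with $\int\varphi\circ\gamma=0$, using the topological balancing argument, which needs $\degmap\varphi\ne0$.
\item Apply Poincar\'e to $\varphi\circ\gamma$, not to $\varphi$.
\item Combine: $\int|D\varphi|^2\ge\int|D(\varphi\circ\gamma)|^2\ge(n-1)\sigma_{n-1}$. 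The factor $\frac{1}{n-2}$ from polar coordinates then gives $\Theta(U,p)\ge\theta_n$.
\end{enumerate}
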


\begin{proof}
We first treat the case $n=3$.

By \cref{prop:cell-structure}, $U$ has a singular point $p$. The
coincidence-neighborhood theorem makes $U$ an $\Sph^2$-valued energy
minimizer in some ball $\B_{r_*}(p)$.  For any sequence $r_j\downarrow0$,
the rescalings
\[
 U_j(x)=U(p+r_jx)
\]
are $\Sph^2$-valued energy minimizers on each fixed ball for all sufficiently
large $j$.  Monotonicity supplies uniform local gradient bounds, while
$|U_j|=1$ supplies the local $L^2$ bounds.  Hence the strong compactness
theorem \cite[Lemma~2.2]{FGKSsing}, followed by a diagonal argument, gives a
subsequence converging strongly in $W^{1,2}_{\mathrm{loc}}(\R^3)$ to a
tangent map $\Phi$.  For every fixed $s>0$, strong convergence and the
definition of the density give
\[
 s^{-1}\int_{\B_s}|D\Phi|^2\dd x
 =\lim_{j\to\infty}s^{-1}\int_{\B_s}|DU_j|^2\dd x
 =\lim_{j\to\infty}\cE(U,p,sr_j)
 =\Theta(U,p).
\]
Thus the normalized energy of $\Phi$ is constant in the radius.  The equality
case of \eqref{eq:monotonicity} gives $x\cdot D\Phi=0$ almost everywhere away
from the origin, so $\Phi$ is zero homogeneous. By the three-dimensional
partial-regularity and tangent-map classification results of
Brezis--Coron--Lieb
\cite[Section~VII, Theorems~7.3--7.4 and Corollary~7.12]{BCL1986}, $p$ is an
isolated singularity and every tangent map at $p$ has the form
\begin{equation}\label{eq:BCL-cell-tangent}
 \Phi(x)=Q\frac{x}{|x|},
 \qquad
 Q\in O(3)
\end{equation}
for $x\ne0$.  Strong convergence on $\B_1$ identifies the density with the
energy of this tangent map.  Hence
\begin{align*}
 \Theta(U,p)
 &=\int_{\B_1}|D\Phi|^2\dd x
 =\int_0^1\int_{\Sph^2}2\dd S\dd r
 =8\pi.
\end{align*}
Since $\theta_3=2\sigma_2=8\pi$, this proves \eqref{eq:sharp-density-point}
when $n=3$.

We now turn to the cases $n\in\{4,5,6\}$.  The argument has three steps: we
prove that the singular set is finite, select a singularity of nonzero local
degree, and estimate the energy of a tangent map at that singularity.

We first prove that the singular set is finite. For a stable stationary
harmonic map into $\Sph^k$,
Theorem~1 of Lin--Wang
\cite{LinWang2006} gives
\[
 \Hdim\Sing U\le n-\widehat d(k)-1.
\]
Here $k=n-1$ and
$\widehat d(3)=3$, $\widehat d(4)=4$, and $\widehat d(5)=5$; hence the
right-hand side is zero for $(n,k)=(4,3),(5,4),(6,5)$.  Thus
$\Hdim\Sing U\le0$ inside each coincidence neighborhood. This alone does not
imply local finiteness, so an additional blow-up argument is required.

Assume that distinct singular points $p_j$ converge to a point $p$. The regular set is open by epsilon regularity, so $p$ is singular. By the coincidence-neighborhood result cited above, there exists $r_*>0$ such that
\[
 U(\B_{r_*}(p))\subset\Sph^{n-1}
 \quad\text{a.e.}
\]
and $U$ is sphere-valued energy minimizing there. Put
\[
 r_j=|p_j-p|,
 \qquad
 U_j(x)=U(p+r_jx),
 \qquad
 q_j=\frac{p_j-p}{r_j}\in\Sph^{n-1}.
\]
Passing to a subsequence, $q_j\to q\in\Sph^{n-1}$.

For every fixed $R>0$ and all sufficiently large $j$, $U_j$ is a sphere-valued minimizer on $\B_R$. The monotonicity formula gives a uniform energy bound:
\begin{align*}
 \int_{\B_R}|DU_j|^2\dd x
 &=r_j^{2-n}\int_{\B_{Rr_j}(p)}|DU|^2\dd x\\
 &=R^{n-2}\cE(U,p,Rr_j)
 \le R^{n-2}\cE(U,p,r_*).
\end{align*}
By the strong compactness theorem \cite[Lemma~2.2]{FGKSsing}, after a diagonal subsequence,
\begin{equation}\label{eq:second-blowup-conv}
 U_j\to\Phi\qquad\text{strongly in }W^{1,2}_{\loc}(\R^n).
\end{equation}
The limit is a sphere-valued minimizing harmonic map on every bounded ball.

For every $s>0$, strong convergence and the definition of density give
\begin{align}
 s^{2-n}\int_{\B_s}|D\Phi|^2\dd x
 &=\lim_{j\to\infty}s^{2-n}\int_{\B_s}|DU_j|^2\dd x\notag\\
 &=\lim_{j\to\infty}\cE(U,p,sr_j)
 =\Theta(U,p).\label{eq:constant-density}
\end{align}
Thus the normalized energy of $\Phi$ is constant in $s$. Equality in the monotonicity formula implies
\[
 x\cdot D\Phi(x)=0\quad\text{for a.e. }x\ne0,
\]
so $\Phi$ is zero homogeneous.

Each $q_j$ is a singular point of $U_j$, so the singular-density lower bound gives
$\Theta(U_j,q_j)\ge\eps_0^2$.  The moving-center upper-semicontinuity
\cite[Corollary~2.3]{FGKSsing} asserts that
\begin{equation}\label{eq:usc}
 \Theta(\Phi,q)\ge\limsup_{j\to\infty}\Theta(U_j,q_j).
\end{equation}
Consequently,
\[
 \Theta(\Phi,q)\ge\limsup_{j\to\infty}\Theta(U_j,q_j)\ge\eps_0^2.
\]
If $q$ were regular, the $C^{1,1}$ regularity supplied by
\cite[Theorem~2.4]{FGKSsing} would give, for all sufficiently small $s$,
\[
 s^{2-n}\int_{\B_s(q)}|D\Phi|^2\dd x\le Cs^2\longrightarrow0,
\]
and hence $\Theta(\Phi,q)=0$, a contradiction. Thus $q\in\Sing\Phi$.

Zero homogeneity and scale invariance of regularity imply that every point $tq$, $t>0$, is singular. Hence $\Sing\Phi$ contains a ray and has Hausdorff dimension at least one. This contradicts the Lin--Wang estimate above, which gives $\Hdim\Sing\Phi\le0$ for the pairs
$(n,n-1)=(4,3),(5,4),(6,5)$.

Therefore singularities cannot accumulate.  The behavior at infinity in
\cref{prop:cell-structure} shows that the singular set is bounded.  It is
closed because the regular set is open.  An infinite closed bounded set in
$\R^n$ has an accumulation point, so the singular set is finite.

We next show that at least one singular point has nonzero local degree. Let
\[
 \Sing U=\{p_1,\dots,p_N\}.
\]
Choose pairwise disjoint balls $\overline{\B_{r_i}(p_i)}$ containing no other singularity. Define
\begin{equation}\label{eq:local-degree}
 d_i=\degmap\left(\left.\frac{U}{|U|}\right|_{\partial\B_{r_i}(p_i)}\right).
\end{equation}
The value is independent of the sufficiently small radius because the normalized map is continuous on the intervening punctured annulus.

For every sufficiently large $R$, set
\[
 \Omega_R=\B_R\setminus\bigcup_{i=1}^N\overline{\B_{r_i}(p_i)}.
\]
The map $v=\frac{U}{|U|}$ is continuous on $\overline{\Omega_R}$. Standard
additivity of degree, with the inner boundary spheres carrying the opposite
orientation, and \eqref{eq:outer-degree} give
\begin{equation}\label{eq:degree-sum}
 1=\degmap\left(\left.\frac{U}{|U|}\right|_{\partial\B_R}\right)
 =\sum_{i=1}^N d_i.
\end{equation}
Since the sum is one, some $d_i$ is nonzero.

Fix such a singularity $p$ and write its local degree as $d_p\ne0$.  The
cited coincidence result makes $U$ sphere valued near $p$;
monotonicity gives uniform local gradient bounds for the rescalings, while
sphere valuedness gives the corresponding local $L^2$ bounds.  The strong
compactness theorem \cite[Lemma~2.2]{FGKSsing} therefore supplies a sequence
$r_j\downarrow0$ such that
\[
 U_j(x)=U(p+r_jx)\to\Phi(x)
 \quad\text{strongly in }W^{1,2}_{\loc}(\R^n).
\]
We next show that
\begin{equation}\label{eq:tangent-form}
 \Phi(x)=\varphi\left(\frac{x}{|x|}\right),
 \qquad
 \varphi\in C^\infty(\Sph^{n-1};\Sph^{n-1}),
 \qquad
 \degmap\varphi=d_p\ne0.
\end{equation}
For every fixed ball, $U_j$ is sphere valued there for all sufficiently large
$j$. As in \eqref{eq:constant-density}, every tangent map is zero homogeneous.
Because $p$ is isolated, for all sufficiently small radii the local degree on
$\partial\B_r(p)$ equals $d_p$.

We first show that $\Phi$ has no singular point away from the origin. If $q\ne0$ were singular, zero homogeneity would make the entire ray $\{tq:t>0\}$ singular. This would contradict the inequality
$\Hdim\Sing\Phi\le0$ obtained above from Theorem~1 of Lin--Wang. Hence
$\Phi$ is smooth on every compact annulus, and zero homogeneity gives
\[
 \Phi(x)=\varphi\left(\frac{x}{|x|}\right)
\]
for a smooth map $\varphi:\Sph^{n-1}\to\Sph^{n-1}$. The sphere-valued harmonic map equation for $\Phi$ reduces to the harmonic map equation for $\varphi$ on $\Sph^{n-1}$.

It remains to justify preservation of degree. Strong $W^{1,2}$ convergence alone does not preserve degree on $\Sph^{n-1}$ when $n>3$. Let
\[
 A=\{x:1/2<|x|<2\}.
\]
Since $\Phi$ is smooth on a slightly larger annulus, for every
$x\in\overline A$ there is $s_x>0$ such that
$\B_{2s_x}(x)$ lies in that annulus and
\[
 (2s_x)^{2-n}\int_{\B_{2s_x}(x)}|D\Phi|^2\dd y<\frac{\eps_0^2}{4}.
\]
The balls $\B_{s_x}(x)$ cover $\overline A$; choose a finite subcover
$\{\B_{s_i}(x_i)\}_{i=1}^N$. Strong $W^{1,2}$ convergence transfers the
small-energy bound on every $\B_{2s_i}(x_i)$ to $U_j$ for all sufficiently
large $j$. On the concentric balls $\B_{s_i}(x_i)$,
\cite[Theorem~2.4]{FGKSsing} gives bounds for $DU_j$ and $D^2U_j$ that are
uniform in $j$. Since the cover is finite, the sequence is equicontinuous on
$\overline A$. After passing to a further subsequence, Arzel\`a--Ascoli and
the strong $L^2$ convergence identify the uniform limit as $\Phi$; hence
\begin{equation}\label{eq:uniform-annulus}
 U_j\to\Phi\qquad\text{uniformly on }\overline A.
\end{equation}
For large $j$, the maps $U_j|_{\Sph^{n-1}}$ and $\varphi$ are uniformly closer than $2$ in the Euclidean target sphere, so the shortest-geodesic interpolation gives a homotopy. Consequently,
\[
 \degmap\varphi
 =\degmap(U_j|_{\Sph^{n-1}})
 =\degmap(U|_{\partial\B_{r_j}(p)})
 =d_p\ne0.
\]

It remains to prove the required lower bound for the spherical energy. We
include the topological balancing argument rather than hiding it in the
citation to Ramanathan. More generally, if $m\ge1$ and
$\psi:\Sph^m\to\Sph^m$ is continuous with nonzero degree, then there is a
conformal automorphism $\gamma$ of $\Sph^m$ such that
\begin{equation}\label{eq:balance}
 \int_{\Sph^m}\psi\circ\gamma\dd S=0\in\R^{m+1}.
\end{equation}
For $\xi\in\B^{m+1}$, define the standard Mobius automorphism
\begin{equation}\label{eq:mobius}
 \Gamma_\xi(x)
 =\frac{(1-|\xi|^2)x+2(1+\xi\cdot x)\xi}
 {1+2\xi\cdot x+|\xi|^2},
 \qquad x\in\Sph^m.
\end{equation}
It is conformal, $\Gamma_0=\operatorname{id}$, and for each $p\in\Sph^m$,
\[
 \Gamma_{tp}(x)\longrightarrow p
 \quad\text{as }t\uparrow1
 \quad\text{for every }x\ne-p.
\]
Set
\[
 H(p,t)=\frac1{\sigma_m}\int_{\Sph^m}
 \psi(\Gamma_{tp}(x))\dd S_x,
 \qquad (p,t)\in\Sph^m\times[0,1).
\]
The map $H$ is continuous. Moreover,
\begin{equation}\label{eq:Hlimit}
 H(\cdot,t)\longrightarrow\psi
 \quad\text{uniformly on }\Sph^m\text{ as }t\uparrow1.
\end{equation}
To verify uniformity, fix $\eps>0$. Uniform continuity of $\psi$ provides
$\delta>0$ such that $|\psi(y)-\psi(p)|<\eps$ whenever
$d_{\Sph^m}(y,p)<\delta$. By rotational invariance, the measure of
\[
 \{x:d_{\Sph^m}(\Gamma_{tp}(x),p)\ge\delta\}
\]
is independent of $p$ and tends to zero as $t\uparrow1$. Splitting the integral over this exceptional set and its complement proves \eqref{eq:Hlimit}.

Suppose \eqref{eq:balance} fails for every conformal automorphism in this family. Then $H(p,t)\ne0$ for all $(p,t)$, and
\[
 \widetilde H(p,t)=\frac{H(p,t)}{|H(p,t)|}
\]
defines a continuous map into $\Sph^m$. At $t=0$, $H(p,0)$ is independent of
$p$, so $\widetilde H(\cdot,0)$ is constant and has degree zero. By
\eqref{eq:Hlimit}, $\widetilde H(\cdot,t)$ extends continuously to $t=1$ with
value $\psi$, so homotopy invariance gives $\degmap\psi=0$, a contradiction.
Hence $H(p,t)=0$ for some $(p,t)$, and $\gamma=\Gamma_{tp}$ satisfies
\eqref{eq:balance}.

Apply this balancing fact with $m=n-1$ and $\psi=\varphi$.  The map $\varphi$
is a smooth harmonic map from $\Sph^{n-1}$ to itself, and it is nonconstant
because its degree is nonzero.  Since $n-1\ge3$, Ramanathan's Main
Theorem~(a) \cite[p.~784]{Ramanathan1986} applies: it states that the energy
of such a harmonic map is no smaller than the energy of any of its
precompositions by an orientation-preserving conformal automorphism.  The
map $\gamma=\Gamma_{tp}$ found above lies in this conformal group because
$t\mapsto\Gamma_{tp}$ is a path from the identity. After removing
Ramanathan's factor $1/2$ from the energy normalization, the theorem gives
\[
 \int_{\Sph^{n-1}}|D\varphi|^2\dd S
 \ge
 \int_{\Sph^{n-1}}|D(\varphi\circ\gamma)|^2\dd S.
\]
The first positive eigenvalue of $-\Delta_{\Sph^{n-1}}$ is $n-1$.
The Poincare inequality applied componentwise, together with
\eqref{eq:balance}, therefore yields
\begin{equation}\label{eq:sphere-lower}
 \int_{\Sph^{n-1}}|D\varphi|^2\dd S
 \ge(n-1)\int_{\Sph^{n-1}}|\varphi\circ\gamma|^2\dd S
 =(n-1)\sigma_{n-1}.
\end{equation}

It remains to convert this sphere energy into the density at $p$. Strong
convergence and zero homogeneity give
\[
 \Theta(U,p)=\int_{\B_1}|D\Phi|^2\dd x.
\]
Using \eqref{eq:tangent-form} and polar coordinates, we obtain
\begin{align*}
 \Theta(U,p)
 &=\int_0^1r^{n-3}\dd r
 \int_{\Sph^{n-1}}|D_{\Sph^{n-1}}\varphi|^2\dd S\\
 &=\frac1{n-2}\int_{\Sph^{n-1}}|D\varphi|^2\dd S.
\end{align*}
Thus \eqref{eq:sphere-lower} gives
\begin{equation}\label{eq:charged-density}
 \Theta(U,p)\ge\theta_n
 =\frac{(n-1)\sigma_{n-1}}{n-2}.
\end{equation}
This is \eqref{eq:sharp-density-point} for $n\in\{4,5,6\}$ and completes
the proof.
\end{proof}

\subsection{Translated calibration and the sharp cell constant}\label{sec:calibration}

For $p\in\R^n$, define
\[
 h_p(x)=\frac{x-p}{|x-p|},
 \qquad
 H_p(x)=p+U_n^*(x-p).
\]
The field $H_p$ need not satisfy $|H_p|\ge1$ when $p\ne0$. It is used only as an algebraic reference field; admissibility is never asserted or needed.

\begin{theorem}\label{thm:sharp-cell}
For $n\in\{3,4,5,6\}$,
\begin{equation}\label{eq:cinfty-kappa}
 c_{\infty,n}=\kappa_n.
\end{equation}
\end{theorem}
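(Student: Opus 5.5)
The upper bound $c_{\infty,n}\le\kappa_n$ is immediate from the admissibility of $U_n^*$ and \eqref{eq:cell-kappa}. For the reverse inequality, I would fix a cell minimizer $U=x+Z$ from \cref{prop:cell-structure} and a singular point $p$ given by \cref{prop:sharp-density}, so that $\Theta(U,p)\ge\theta_n$. The plan is to turn this local density bound into a global energy bound $\cC_n(U)\ge\kappa_n$ through an \emph{exact} identity centered at $p$. The same identity, evaluated on the reference field $H_p$, reproduces the numbers $\theta_n$ and $\kappa_n$.

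\textbf{Step 1: an integrated monotonicity identity centered at $p$.} The stationarity relation \eqref{eq:stress} is tested with $X(x)=(x-p)\eta(|x-p|)$, with $\eta$ approximating $\1_{[r,R]}$. Letting $r\downarrow0$ and using the definition of $\Theta$, this should give, for every large $R$,
\[
 R^{2-n}\int_{\B_R(p)}|DU|^2\dd x
 =\Theta(U,p)+2\int_{\B_R(p)}|x-p|^{2-n}|\partial_{r_p}U|^2\dd x .
\]
Both sides grow like $R^2$, so the leading behavior has to be subtracted explicitly. I would write $DU=I+DZ$ and $\partial_{r_p}U=h_p\cdot\text{(radial)}+\partial_{r_p}Z$, expand both integrands, and separate three groups of terms: the terms independent of $Z$, which are explicit polynomials in $R$ and $p$; the terms linear in $Z$; and the quadratic terms. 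The linear terms are divergences, such as $\operatorname{div}Z$ and $\operatorname{div}\bigl(|x-p|^{2-n}(x-p)\cdot(\cdot)\bigr)$, and so reduce to boundary integrals on $\partial\B_R(p)$.

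\textbf{Step 2: evaluate the boundary terms using the far field.} By \cref{prop:cell-structure}(ii)--(iii), $Z$ is harmonic outside $\B_{R_0}$ and lies in $D^{1,2}$. Its expansion at infinity is therefore $Z=\mathcal B\,x|x|^{-n}+O(|x|^{-n})$ with a constant matrix $\mathcal B$; the dipole term is forced because $Z\in L^{2^*}$ excludes constants. The boundary fluxes from Step~1 then converge as $R\to\infty$ to explicit linear expressions in $\operatorname{tr}\mathcal B$. This convergence uses \eqref{eq:Z-decay} and the gradient decay obtained from harmonic estimates. The flux of $Z$ through $\partial\B_R$ does not vanish (for $Z_n^*$ it equals $\sigma_{n-1}b_n$), so this term must be tracked exactly. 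The same computation applies verbatim to $H_p$, whose far field is $b_n(x-p)|x-p|^{-n}$ and whose density at $p$ is exactly $\theta_n$. Equality holds in every step for $H_p$, since it is radial about $p$. The expected outcome is an identity of the form
\[
 \cC_n(U)-\kappa_n
 =\bigl(\Theta(U,p)-\theta_n\bigr)+\mathcal Q(U,p)+\mathcal L(\mathcal B-b_nI),
\]
where $\mathcal Q$ collects the quadratic radial-derivative terms. I expect $\mathcal Q\ge0$, since it should be a weighted integral of $|\partial_{r_p}(U-H_p)|^2$-type quantities, and $\mathcal L$ to be linear in the far-field coefficients.

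\textbf{Step 3: dispose of the linear term.} This is the step I expect to be the main obstacle. The monotonicity identity alone does not see $\mathcal B$ correctly, so I would add a second exact identity. Testing the Euler equation \eqref{eq:EL} against $Z$, or against $U-H_p$, on $\B_R$ uses \eqref{eq:EL} with $|U|=1$ on the contact set; I expect the contact contribution to have a favorable sign relative to $H_p$, using $|H_p|$ versus $1$. This identity expresses $\cC_n(U)$ through the same boundary fluxes, and the aim is to choose the combination of the two identities in which $\operatorname{tr}\mathcal B$ cancels. The explicit constants computed for $U_n^*$ in \eqref{eq:cell-kappa} serve as the check that the combination is correct. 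Once $\mathcal L$ is eliminated, $\Theta(U,p)\ge\theta_n$ from \cref{prop:sharp-density} gives $\cC_n(U)\ge\kappa_n$, and therefore $c_{\infty,n}=\kappa_n$.
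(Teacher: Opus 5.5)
There is a genuine gap, and it already appears in Steps 1--2, not only in Step 3. In the normalized monotonicity identity on $\B_R(p)$ the density and the energy decouple. After you subtract $\sigma_{n-1}R^2$, the energy enters only as $R^{2-n}\int_{\B_R(p)}|DZ|^2\dd x$ and the fluxes only as $R^{2-n}\int_{\partial\B_R(p)}Z\cdot\nu\dd S$. Both tend to zero, so no flux converges to a nonzero multiple of $\operatorname{tr}\mathcal B$. Moreover, the linear radial term is not a divergence. With $y=x-p$ and $\nu=y/|y|$,
\[
 |y|^{2-n}\nu\cdot\partial_{r_p}Z
 =\operatorname{div}\bigl(|y|^{2-n}(\nu\cdot Z)\nu\bigr)-|y|^{1-n}\nu\cdot Z .
\]
Hence the order-one balance as $R\to\infty$ is
\[
 4\int_{\R^n}|y|^{1-n}\nu\cdot Z\dd x
 =\Theta(U,p)+2\int_{\R^n}|y|^{2-n}|\partial_{r_p}Z|^2\dd x ,
\]
in which a linear functional of $Z$ takes the place of $\cC_n(U)$. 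The energy appears only at the next order in $R$, equivalently in the unnormalized dilation identity. There it gives $\cC_n(U)=\frac{2\sigma_{n-1}}{n-2}\operatorname{tr}\mathcal B$, with no $\Theta$. Testing \eqref{eq:EL} against $Z$ gives $\cC_n(U)=\int_{\{|U|=1\}}|DU|^2(1-U\cdot x)\dd x$, again without $\Theta$; here the two $\operatorname{tr}\mathcal B$ fluxes cancel by themselves. So there is no term $\mathcal L$ to eliminate. What is missing is any relation between $\cC_n(U)$ and $\Theta(U,p)$, and Step 3 does not supply one. (Separately, $Z\in L^{2^*}$ does not exclude a monopole $c|x|^{2-n}$; ruling it out requires the translation flux of the stress tensor.)

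The missing idea is to use monotonicity only on the fixed ball $\B_{\rho_n}(p)$ and to handle the exterior by an exact calibration against the translated profile $H_p$. Consider the field $A_p=DZ_p^*-\1_{\B_{\rho_n}(p)}Dh_p$.
\begin{itemize}
\item It equals $-I$ inside $\B_{\rho_n}(p)$ and $b_n|y|^{-n}(I-n\nu\otimes\nu)$ outside.
\item Its rows are divergence free on each side.
\item Both normal traces on $\partial\B_{\rho_n}(p)$ equal $-\nu$, and there is no mass at $p$.
\item It lies in $L^2$.
\end{itemize}
Therefore $\int_{\R^n}A_p:DW\dd x=0$ for every $W\in D^{1,2}$. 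Expanding $\cC_n(U)=\int|D(Z_p^*+W)|^2\dd x$ with $W=Z-Z_p^*$ gives \eqref{eq:translated-identity} exactly, with no far-field terms and no need for an asymptotic expansion of $Z$. Monotonicity on $\B_{\rho_n}(p)$, together with \eqref{eq:sharp-density-point}, then yields the inequality you were aiming for, with $\mathcal L=0$ and the factor $\rho_n^{n-2}$ rather than $1$:
\[
 \cC_n(U)-\kappa_n
 \ge\rho_n^{n-2}\bigl(\Theta(U,p)-\theta_n\bigr)
 +\int_{\R^n\setminus\B_{\rho_n}(p)}|D(U-H_p)|^2\dd x\ge0 .
\]
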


\begin{proof}
We first establish the translated calibration identity. Let $U=x+Z$ be
admissible in \eqref{eq:cinfty}, and fix $p\in\R^n$. We claim that
\begin{equation}\label{eq:translated-identity}
 \cC_n(U)-\kappa_n
 =\int_{\B_{\rho_n}(p)}
 \bigl(|DU|^2-|Dh_p|^2\bigr)\dd x
 +\int_{\R^n\setminus\B_{\rho_n}(p)}
 |D(U-H_p)|^2\dd x.
\end{equation}
Write $\rho=\rho_n$ and
\[
 Z_p^*=H_p-x,
 \qquad
 W=Z-Z_p^*.
\]
Then $W\in D^{1,2}$. Define
\begin{equation}\label{eq:Ap}
 A_p=DZ_p^*-\1_{\B_\rho(p)}Dh_p.
\end{equation}
Let $y=x-p$, $r=|y|$, and $\nu=y/r$. From the definition of $U_n^*$,
\begin{equation}\label{eq:Ap-pieces}
 A_p=
 \begin{cases}
 -I,&r<\rho,\\[1mm]
 b_nr^{-n}(I-n\nu\otimes\nu),&r>\rho.
 \end{cases}
\end{equation}
We verify the divergence claim row by row.  For $r>0$, write
$B_{ij}=r^{-n}(\delta_{ij}-n\nu_i\nu_j)$.  Since
\[
 \partial_j(r^{-n}\delta_{ij})=-nr^{-n-1}\nu_i,
 \qquad
 \partial_j(r^{-n}\nu_i\nu_j)=-r^{-n-1}\nu_i,
\]
where $\nu_j\partial_j\nu_i=0$ and
$\operatorname{div}\nu=(n-1)/r$, we obtain
\[
 \partial_jB_{ij}=0.
\]
Thus each row of $A_p$ is divergence free in both regions in
\eqref{eq:Ap-pieces}. Across the interface, the two normal traces are
\[
 A_p^-\nu=-\nu,
 \qquad
 A_p^+\nu=-(n-1)b_n\rho^{-n}\nu=-\nu,
\]
because $b_n=\rho^n/(n-1)$. Thus the jump of the normal trace is zero, so
there is no interface measure in $\operatorname{div}A_p$. There is no point
mass at $p$, because $A_p=-I$ in a full neighborhood of $p$. Also
$A_p\in L^2(\R^n)$: it is bounded in the inner ball and decays like $r^{-n}$
outside. Hence
\begin{equation}\label{eq:A-divfree}
 \operatorname{div}A_p=0\quad\text{in }\cD'(\R^n),
 \qquad
 \int_{\R^n}A_p:DW\dd x=0.
\end{equation}
The second identity is first true for compactly supported smooth $W$ and then
follows for every $W\in D^{1,2}$ by approximation in the gradient norm and
$A_p\in L^2$.

Since $\cC_n(H_p)=\cC_n(U_n^*)=\kappa_n$ by translation invariance,
\begin{align*}
 \cC_n(U)-\kappa_n
 &=\int_{\R^n}|D(Z_p^*+W)|^2-|DZ_p^*|^2\dd x\\
 &=\int_{\R^n}|DW|^2\dd x
   +2\int_{\R^n}DZ_p^*:DW\dd x.
\end{align*}
By \eqref{eq:Ap}--\eqref{eq:A-divfree},
\[
 \int_{\R^n}DZ_p^*:DW
 =\int_{\B_\rho(p)}Dh_p:DW.
\]
Inside $\B_\rho(p)$, $DH_p=Dh_p$, and hence
\[
 DW=D(U-H_p)=DU-Dh_p.
\]
Therefore
\begin{align*}
 |DW|^2+2Dh_p:DW
 &=|DU-Dh_p|^2+2Dh_p:(DU-Dh_p)\\
 &=|DU|^2-|Dh_p|^2.
\end{align*}
Outside the ball, the term is simply $|D(U-H_p)|^2$. This proves
\eqref{eq:translated-identity} and, in particular, shows explicitly why no
boundary term at infinity is present.

We now apply this identity to a cell minimizer. Let $U$ be such a minimizer,
whose existence is given by
\cref{prop:cell-structure}. By \cref{prop:sharp-density}, choose a singularity
$p$ satisfying \eqref{eq:sharp-density-point}. Monotonicity centered at $p$
gives
\begin{align*}
 \int_{\B_{\rho_n}(p)}|DU|^2\dd x
 &\ge \Theta(U,p)\rho_n^{n-2}\\
 &\ge \theta_n\rho_n^{n-2}
 =\int_{\B_{\rho_n}(p)}|Dh_p|^2\dd x,
\end{align*}
where the second inequality is \eqref{eq:sharp-density-point}, and the final equality follows from $|Dh_p|^2=(n-1)|x-p|^{-2}$. The translated identity \eqref{eq:translated-identity} now yields
\[
 \cC_n(U)\ge\kappa_n.
\]
On the other hand, the radial cell $U_n^*$ is admissible and has energy
$\kappa_n$ by \eqref{eq:cell-kappa}. Hence equality holds.
\end{proof}

\section{From the endpoint asymptotic to global minimality}\label{sec:global-minimality}

The sharp cell constant now determines the behavior as $a\downarrow0$.  We
then propagate this endpoint information to every $a\in(0,1)$ by a parameter
monotonicity argument that applies to arbitrary minimizers.

\subsection{The small-obstacle asymptotic}\label{sec:finite-cell}

For $L\ge1$, define
\begin{equation}\label{eq:cL}
 c_{L,n}=\inf\left\{
 \int_{\B_L}|DZ|^2\dd x:
 Z\in W_0^{1,2}(\B_L;\R^n),\ |x+Z|\ge1\text{ a.e.}
 \right\}.
\end{equation}

\begin{proposition}\label{prop:endpoint}
For $n\in\{3,4,5,6\}$,
\begin{equation}\label{eq:endpoint}
 m_n(a)=\sigma_{n-1}+\kappa_na^n+o(a^n)
 \qquad(a\downarrow0).
\end{equation}
More precisely, if $L=a^{-1}$, then
\begin{equation}\label{eq:exact-scaling}
 m_n(a)-\sigma_{n-1}=a^n c_{L,n}.
\end{equation}
\end{proposition}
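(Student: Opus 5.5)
Two steps, in order: first the exact scaling identity \eqref{eq:exact-scaling}, then the limit $c_{L,n}\to c_{\infty,n}=\kappa_n$ as $L\to\infty$. With \cref{thm:sharp-cell} in hand, these give \eqref{eq:endpoint}.

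\textbf{The exact scaling identity.} For $u\in\cA_a$ write $u=x+z$ with $z\in W^{1,2}_0(\B_1;\R^n)$. Then
\[
\int_{\B_1}|Du|^2=\int_{\B_1}|Dz|^2+2\int_{\B_1}I:Dz+n|\B_1|
=\int_{\B_1}|Dz|^2+\sigma_{n-1},
\]
because $\int_{\B_1}\operatorname{div}z=0$ and $n|\B_1|=\sigma_{n-1}$. Next rescale by $Z(y)=a^{-1}z(ay)$ on $\B_L$, with $L=a^{-1}$. Then $Z\in W^{1,2}_0(\B_L)$, and
\[
|y+Z(y)|=a^{-1}|u(ay)|\ge1
\]
exactly when $|u|\ge a$. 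The energy transforms as
\[
\int_{\B_L}|DZ|^2\dd y=a^{-n}\int_{\B_1}|Dz|^2\dd x .
\]
This map is a bijection between the two admissible classes, so \eqref{eq:exact-scaling} holds.

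\textbf{The limit $c_{L,n}\to\kappa_n$, upper bound.} Zero extension embeds the admissible class for $c_{L,n}$ into that of \eqref{eq:cinfty}, since $|x+0|=|x|\ge L\ge1$ outside $\B_L$. Hence $c_{L,n}\ge c_{\infty,n}=\kappa_n$, and $c_{L,n}$ is nonincreasing in $L$. For the reverse inequality I will not run an abstract density argument. Instead I truncate the explicit radial cell: take $Z=\chi(Z_n^*)$, where $\chi$ is a cutoff equal to $1$ on $\B_{L/2}$, vanishing near $\partial\B_L$, with $|D\chi|\le C/L$.

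Admissibility is the main point to check. On $\B_{L/2}$ the map is $U_n^*$, whose modulus is at least $1$. On the annulus $L/2\le|x|\le L$,
\[
|x+\chi Z_n^*|\ge|x|-b_n|x|^{1-n}\ge L/2-b_n\ge1
\]
for large $L$.

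The energy error is then
\[
\int_{|x|>L/2}|DZ_n^*|^2+C\int_{L/2<|x|<L}L^{-2}|Z_n^*|^2=O(L^{-n})+O(L^{-2}L^{n}L^{2-2n})=O(L^{-n}).
\]
So $c_{L,n}\le\kappa_n+O(L^{-n})$.

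\textbf{Conclusion.} Combining the two bounds gives $c_{L,n}=\kappa_n+O(L^{-n})$, which is more than the $o(1)$ needed. Multiplying by $a^n$ in \eqref{eq:exact-scaling} yields \eqref{eq:endpoint}, in fact with error $O(a^{2n})$.

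The only substantive input is \cref{thm:sharp-cell}, which supplies the lower bound. Everything else is the bookkeeping above, and the step requiring care is the admissibility check in the cutoff region.
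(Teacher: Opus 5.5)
Your proof is correct and follows the paper's structure: exact rescaling to $\B_L$, the lower bound $c_{L,n}\ge c_{\infty,n}=\kappa_n$ by zero extension together with \cref{thm:sharp-cell}, and an explicit competitor for the upper bound. The only difference is the competitor. The paper rescales the radial map $u_{L^{-1},n}$, so its upper bound is just $m_n(a)\le M_n(a)$ read through \eqref{eq:exact-scaling} and \eqref{eq:M-expansion}. You instead truncate $Z_n^*$ directly; there admissibility is immediate, since $Z_n^*=b_nx|x|^{-n}$ points outward and so $|x+\chi Z_n^*|\ge|x|$. Both routes give the $O(a^{2n})$ error.
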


\begin{proof}
Given $u\in\cA_a$, define on $\B_L$
\[
 U(y)=a^{-1}u(ay),
 \qquad
 Z(y)=U(y)-y.
\]
Then $|U|\ge1$ and $U(y)=y$ on $\partial\B_L$, so $Z\in W_0^{1,2}(\B_L)$. Conversely, this construction is reversible. Since $D_yU(y)=D_xu(ay)$,
\begin{align*}
 E(u;\B_1)
 &=a^n\int_{\B_L}|DU|^2\dd y\\
 &=a^n\int_{\B_L}|I+DZ|^2\dd y\\
 &=a^n\left(n|\B_L|+2\int_{\B_L}\operatorname{div} Z\dd y+\int_{\B_L}|DZ|^2\dd y\right).
\end{align*}
The divergence integral vanishes because $Z$ has zero trace. Also
$a^nn|\B_L|=n|\B_1|=\sigma_{n-1}$. Taking infima proves \eqref{eq:exact-scaling}.

It remains to determine the limit of $c_{L,n}$.  Let $Z$ be admissible for
$c_{L,n}$.
Extend $Z$ by zero outside $\B_L$. The
extension belongs to $D^{1,2}$. On $\R^n\setminus\B_L$, the corresponding
map is $U(x)=x$, and because $L\ge1$, $|U(x)|=|x|\ge1$. Thus the extension is
admissible for the infinite-cell problem, and
\[
 c_{L,n}\ge c_{\infty,n}=\kappa_n.
\]

For the reverse bound, put $a=L^{-1}$ and apply the preceding scaling to the
radial map $u_{a,n}$.  More explicitly, define
\[
 U_L(y)=L u_{L^{-1},n}(y/L),
 \qquad
 Z_L(y)=U_L(y)-y
 \quad\text{in }\B_L.
\]
Because $u_{L^{-1},n}=x$ on $\partial\B_1$, the map $Z_L$ has zero trace on
$\partial\B_L$; moreover $|U_L|\ge1$.  Thus $Z_L$ is admissible for
$c_{L,n}$, and the same change of variables used in
\eqref{eq:exact-scaling} gives
\[
 c_{L,n}\le L^n\bigl(M_n(L^{-1})-\sigma_{n-1}\bigr).
\]
By \eqref{eq:M-expansion}, the right side tends to $\kappa_n$. Hence
$c_{L,n}\to\kappa_n$ as $L\to\infty$. By \eqref{eq:exact-scaling}, this is
exactly \eqref{eq:endpoint}.
\end{proof}

\subsection{Parameter monotonicity and global energy equality}
\label{sec:global-monotonicity}

Fix $a\in(0,1)$ and let $u$ be any minimizer at parameter $a$. Set
\begin{equation}\label{eq:ECP}
 E=\int_{\B_1}|Du|^2\dd x=m_n(a),
 \qquad
 C=\int_{\{|u|=a\}}|Du|^2\dd x,
 \qquad
 P=\int_{\partial\B_1}|\partial_\nu u|^2\dd S.
\end{equation}
The boundary integral is classical because of \cref{prop:boundary-collar}.

For $a,b\in(0,1)$ define
\begin{align}
 q_{a,b}(r)&=r+\frac{b-a}{1-a}(1-r),\label{eq:q-ab}\\
 T_{a,b}(y)&=q_{a,b}(|y|)\frac{y}{|y|}.
\end{align}
Since
\begin{equation}\label{eq:q-obstacle}
 q_{a,b}(r)-b=(r-a)\frac{1-b}{1-a}\ge0
 \qquad(r\ge a),
\end{equation}
$T_{a,b}$ maps the $a$-admissible target into the $b$-admissible target and fixes $\Sph^{n-1}$.
On every set $\{|y|\ge a_0\}$ with $a_0>0$, this radial formula is
Lipschitz.  Whenever the Sobolev chain rule is used below, we first extend
its restriction to the relevant closed annulus to a Lipschitz map on
$\R^n$.

\begin{proposition}\label{prop:variational-identities}
The function $m_n$ is locally Lipschitz on $(0,1)$.  With $E$, $C$, and $P$
defined by \eqref{eq:ECP}, every minimizer satisfies the following identities.
At every point $a$ where $m_n$ is differentiable,
\begin{equation}\label{eq:mprime-C}
 m_n'(a)=\frac{2C}{a}.
\end{equation}
Moreover,
\begin{equation}\label{eq:Pohozaev}
 P=(n-1)\sigma_{n-1}-(n-2)E,
\end{equation}
and
\begin{equation}\label{eq:testing-identity}
 E-C=\int_{\partial\B_1}x\cdot\partial_\nu u\dd S.
\end{equation}
Consequently,
\begin{equation}\label{eq:C-lower}
 C\ge E-
 \sqrt{\sigma_{n-1}\bigl((n-1)\sigma_{n-1}-(n-2)E\bigr)}.
\end{equation}
\end{proposition}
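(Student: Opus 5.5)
The plan has four parts: the Lipschitz bound and derivative formula \eqref{eq:mprime-C} come from composing a minimizer with the target maps $T_{a,b}$; \eqref{eq:Pohozaev} comes from domain stationarity with the radial field; \eqref{eq:testing-identity} comes from testing the Euler equation \eqref{eq:EL} against $u$; and \eqref{eq:C-lower} follows by Cauchy--Schwarz.

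\emph{Lipschitz bound and derivative formula.} Let $u$ minimize at parameter $a$ and set $u_b=T_{a,b}\circ u$ for $b$ near $a$. By \eqref{eq:q-obstacle} and $q_{a,b}(1)=1$, the map $u_b$ lies in $\cA_b$. Using the polar decomposition \eqref{eq:polar-decomp} and \cref{lem:modulus-truncation} (so $a\le\rho\le1$), we have
\[
 |Du_b|^2=q_{a,b}'(\rho)^2|D\rho|^2+q_{a,b}(\rho)^2|D\omega|^2 .
\]
Here $q'_{a,b}=\frac{1-b}{1-a}$ and $q_{a,b}(\rho)=\rho+O(|b-a|)$ uniformly. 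This gives $m_n(b)\le m_n(a)+K|b-a|$ with $K$ locally uniform, since $E$ is bounded by $\sigma_{n-1}$ via the hedgehog. Swapping the roles of $a$ and $b$ gives local Lipschitz continuity.

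For the derivative, compute $\frac{d}{db}E(u_b)|_{b=a}$ exactly. We have $\partial_b q_{a,b}(r)|_{b=a}=\frac{1-r}{1-a}$ and $\partial_b q'_{a,b}|_{b=a}=-\frac1{1-a}$. Hence
\[
 \frac{d}{db}E(u_b)\Big|_{b=a}=\frac{2}{1-a}\int\bigl(-|D\rho|^2+\rho(1-\rho)|D\omega|^2\bigr).
\]
Since $m_n(b)\le E(u_b)$ with equality at $b=a$, at a differentiability point $m_n'(a)$ equals this one-sided derivative from both sides. The key step is converting this into $2C/a$. On $\{\rho=a\}$ we have $D\rho=0$ a.e., and there the integrand is $a(1-a)|D\omega|^2=\frac{1-a}{a}|Du|^2$, which contributes exactly $2C/a$. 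The remaining noncontact integral $\int_{\{\rho>a\}}\bigl(-|D\rho|^2+\rho(1-\rho)|D\omega|^2\bigr)$ must vanish. I would get this from a second family of inner target variations in the noncontact region: use $\rho\mapsto\rho+t\,g(\rho)$ with $g(a)=0$ and $g(1)=0$, which is admissible for both signs of $t$. Choosing $g(\rho)=(\rho-a)(1-\rho)$ and using stationarity gives
\[
 \int g'(\rho)|D\rho|^2+\rho g(\rho)|D\omega|^2=0 .
\]
Combining this with the $a$-variation expression, which is the case $g=\frac{(1-\rho)}{1-a}$ with $g(a)\ne0$, the difference is supported where $\rho>a$ but involves $g(a)$ only on the contact set. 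So the noncontact part cancels. This bookkeeping is the main obstacle, and I expect the linear combination $\frac{1-\rho}{1-a}-\frac{(\rho-a)(1-\rho)}{a(1-a)}=\frac{(1-\rho)}{a}$ to be what is needed. Indeed $g=\frac{1-\rho}{a}$ yields $\frac1a\int(-|D\rho|^2+\rho(1-\rho)|D\omega|^2)$, and on the contact set this gives $\frac{1-a}{a}\cdot\frac{a}{?}$. I would adjust constants until the identity matches \eqref{eq:Mprime} in the radial case, which is a consistency check.

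\emph{Pohozaev identity \eqref{eq:Pohozaev}.} Test \eqref{eq:stress} with $X=\eta(|x|)x$, where $\eta$ is a cutoff equal to $1$ on $\B_{1-\delta}$ and tending to $\1_{\B_1}$. \cref{prop:boundary-collar} gives smoothness in the collar, so boundary terms are classical. The limit gives
\[
 (n-2)E=\int_{\partial\B_1}\bigl(|Du|^2-2|\partial_\nu u|^2\bigr)dS .
\]
On $\partial\B_1$ the tangential derivative of $u=x$ has squared norm $n-1$, so $|Du|^2=(n-1)+|\partial_\nu u|^2$. This yields $(n-2)E=(n-1)\sigma_{n-1}-P$.

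\emph{Testing identity \eqref{eq:testing-identity}.} Pair \eqref{eq:EL} with $u\eta$ and let the cutoff $\eta\to\1_{\B_1}$. On the contact set $|u|^2=a^2$, so the right side gives $-C$. The left side integrates by parts to $-E+\int_{\partial\B_1}u\cdot\partial_\nu u$, with $u=x$ on the boundary. This pairing needs justification, since $u\in W^{1,2}\cap L^\infty$ and $|Du|^2u\1$ is in $L^1$; I would handle it by approximating the test function with a cutoff and using that $u$ is bounded.

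\emph{Lower bound \eqref{eq:C-lower}.} Cauchy--Schwarz gives $E-C\le\sqrt{\sigma_{n-1}P}$. Substituting \eqref{eq:Pohozaev} for $P$ gives \eqref{eq:C-lower}.
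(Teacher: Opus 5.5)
\textbf{Verdict: there is a genuine gap, in the derivative formula \eqref{eq:mprime-C}.} The other parts of your plan are sound and match the paper: the Lipschitz estimate via $T_{a,b}$, the Pohozaev identity from \eqref{eq:stress} with $X=\eta(|x|)x$ and the smooth boundary collar, the testing identity from pairing \eqref{eq:EL} with $\eta u$, and the final Cauchy--Schwarz step. One small slip: the hedgehog has energy $\frac{(n-1)\sigma_{n-1}}{n-2}$, not $\sigma_{n-1}$, and in fact $m_n(a)>\sigma_{n-1}$. Any finite bound suffices there, so this is harmless.

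\textbf{Where the gap is.} Your polar formula for $\frac{d}{db}E(u_b)|_{b=a}$ is correct. So is your observation that the contact set contributes exactly $2C/a$. What is not proved is the remaining claim
\[
\int_{\{\rho>a\}}\bigl(-|D\rho|^2+\rho(1-\rho)|D\omega|^2\bigr)\dd x=0 .
\]
Two-sided variations $\rho\mapsto\rho+tg(\rho)$ with $g(a)=g(1)=0$ only give identities of the form $\int\bigl(g'(\rho)|D\rho|^2+\rho g(\rho)|D\omega|^2\bigr)=0$. Adding multiples of these zero identities to your expression never changes its value. Every such identity carries a weight vanishing at $\rho=a$, whereas your quantity carries the weight $\frac{1-\rho}{1-a}$, which does not.

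The missing information is exactly that no measure sits on the free boundary, and variations vanishing at $\rho=a$ cannot detect it. If you approximate by $g_\varepsilon=\min\{1,(\rho-a)/\varepsilon\}(1-\rho)$, you are left with a flux term $\varepsilon^{-1}\int_{\{a<\rho<a+\varepsilon\}}(1-\rho)|D\rho|^2$. You would then have to show it vanishes using regularity near the free boundary. Your algebra is also off: $\frac{1-\rho}{1-a}-\frac{(\rho-a)(1-\rho)}{a(1-a)}=\frac{(1-\rho)(2a-\rho)}{a(1-a)}$, not $\frac{1-\rho}{a}$. Adjusting constants until the radial case matches is a consistency check, not a proof.

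\textbf{How the paper closes it.} Since $T_{a,a+t}(y)=y+tF_a(y)$ with $F_a(y)=\frac{1-|y|}{1-a}\frac{y}{|y|}$, the first variation equals $2\int Du:D(F_a\circ u)$. The paper then tests \eqref{eq:EL} directly with $F_a\circ u\in W_0^{1,2}\cap L^\infty$. This uses the same bounded-approximation justification you already need for the testing identity. Because $F_a(u)=u/a$ on $\{|u|=a\}$, the right side equals $a^{-3}\int_{\{|u|=a\}}|Du|^2|u|^2=C/a$.

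In your polar notation, testing \eqref{eq:EL} with $g(\rho)\omega$, for any Lipschitz $g$ with $g(1)=0$, gives
\[
\int_{\B_1}\bigl(g'(\rho)|D\rho|^2+\rho g(\rho)|D\omega|^2\bigr)\dd x=\frac{g(a)}{a}\,C .
\]
Taking $g=\frac{1-\rho}{1-a}$ yields $m_n'(a)=2C/a$ immediately. It also shows, after the fact, that your noncontact integral does vanish.
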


\begin{proof}
Fix a compact interval $I\Subset(0,1)$. For a radial map
$T(y)=q(|y|)\frac{y}{|y|}$, the radial eigenvalue of $DT$ is $q'(|y|)$ and
each tangential eigenvalue is $\frac{q(|y|)}{|y|}$. From \eqref{eq:q-ab},
uniformly for $a,b\in I$ and $|y|\ge a$,
\[
 \|DT_{a,b}(y)-I\|\le C_I|b-a|.
\]
Applying $T_{a,b}$ to an $a$-minimizer gives
\[
 m_n(b)\le (1+C_I|b-a|)^2m_n(a).
\]
Applying $T_{b,a}$ to a $b$-minimizer gives the reverse estimate. Since $m_n$ is locally bounded above by the radial energy, these inequalities imply local Lipschitz continuity.

Now fix an $a$-minimizer $u$ and set
\begin{equation}\label{eq:Fa}
 F_a(y)=\frac{1-|y|}{1-a}\frac{y}{|y|}.
\end{equation}
On the range $a\le|u|\le1$, the function $F_a$ is Lipschitz; one may extend it Lipschitzly to all of $\R^n$. Moreover, $F_a\circ u\in W_0^{1,2}(\B_1;\R^n)$ because $F_a=0$ on $\Sph^{n-1}$. Put
\[
 A=\int_{\B_1}Du:D(F_a\circ u)\dd x,
 \qquad
 B=\int_{\B_1}|D(F_a\circ u)|^2\dd x.
\]
For $t>0$, $T_{a,a+t}\circ u$ is admissible at $a+t$, so
\[
 \frac{m_n(a+t)-m_n(a)}t
 \le 2A+tB.
\]
For $t<0$, the same comparison still gives
$m_n(a+t)-m_n(a)\le2tA+t^2B$, but division by the negative number $t$ reverses the inequality:
\[
 \frac{m_n(a+t)-m_n(a)}t
 \ge2A+tB.
\]
If $m_n$ is differentiable at $a$, the two one-sided limits yield
\begin{equation}\label{eq:derivative-A}
 m_n'(a)=2\int_{\B_1}Du:D(F_a\circ u)\dd x.
\end{equation}

Use $F_a\circ u$ as a test field in the weak Euler equation \eqref{eq:EL}. This extension of the distributional identity is legitimate: $F_a\circ u\in W_0^{1,2}\cap L^\infty$, and it admits compactly supported smooth approximations converging in $W^{1,2}$ and almost everywhere with a common $L^\infty$ bound. The left side passes by $L^2$ convergence, while the right side passes by dominated convergence because $|Du|^2\1_{\{|u|=a\}}\in L^1$. On the contact set, $|u|=a$ and
\[
 F_a(u)=\frac{u}{a}.
\]
Therefore
\begin{align*}
 \int_{\B_1}Du:D(F_a\circ u)\dd x
 &=\int_{\{|u|=a\}}\frac{|Du|^2}{a^2}
 u\cdot\frac{u}{a}\dd x\\
 &=\frac1a\int_{\{|u|=a\}}|Du|^2\dd x
 =\frac Ca.
\end{align*}
Substitute this into \eqref{eq:derivative-A}.

We next derive the Pohozaev identity from the stationarity formula. Testing
\eqref{eq:stress} with radial cutoffs gives, for almost every
$0<r<1$,
\begin{equation}\label{eq:pohozaev-r}
 (n-2)\int_{\B_r}|Du|^2\dd x
 =r\int_{\partial\B_r}\left(|Du|^2-2|\partial_ru|^2\right)\dd S.
\end{equation}
For completeness, take $X(x)=\eta(|x|)x$ in \eqref{eq:stress}. Then
\[
 \partial_iX_j=\eta\delta_{ij}+\eta'\frac{x_ix_j}{|x|},
\]
so stationarity becomes
\[
 \int_{\B_1}(n-2)\eta|Du|^2\dd x
 +\int_{\B_1}|x|\eta'\bigl(|Du|^2-2|\partial_ru|^2\bigr)\dd x=0.
\]
Here is the limiting argument. Fix $r\in(0,1)$ and
$0<\varepsilon<1-r$, and define
\[
 \eta_{r,\varepsilon}(s)=
 \begin{cases}
 1,&0\le s\le r,\\[1mm]
 \displaystyle\frac{r+\varepsilon-s}{\varepsilon},
    &r<s<r+\varepsilon,\\[2mm]
 0,&r+\varepsilon\le s\le1.
 \end{cases}
\]
Smooth radial approximations with uniformly bounded derivatives converge to
$\eta_{r,\varepsilon}$ pointwise with almost-everywhere convergence of their
derivatives. Since $|Du|^2\in L^1$, dominated convergence therefore permits
$\eta_{r,\varepsilon}$ itself in the stationarity identity. If
\[
 F(s)=\int_{\partial\B_s}
 \bigl(|Du|^2-2|\partial_ru|^2\bigr)\dd S,
\]
the coarea formula gives the complete identity
\begin{align*}
 0={}&(n-2)\int_{\B_r}|Du|^2\dd x\\
 &+(n-2)\int_{\B_{r+\varepsilon}\setminus\B_r}
 \frac{r+\varepsilon-|x|}{\varepsilon}|Du|^2\dd x
 -\frac1\varepsilon\int_r^{r+\varepsilon}sF(s)\dd s.
\end{align*}
The second term tends to zero by absolute continuity of the integral. At
every Lebesgue point of the locally integrable function $s\mapsto sF(s)$,
\[
 \frac1\varepsilon\int_r^{r+\varepsilon}sF(s)\dd s
 \longrightarrow rF(r).
\]
Letting $\varepsilon\downarrow0$ therefore gives
\eqref{eq:pohozaev-r}. This also explains why the identity is initially
asserted only for almost every $r$.

Choose a sequence of such Lebesgue points $r_j\uparrow1$.  By
\cref{prop:boundary-collar}, $u$ is smooth in a full collar of the boundary,
so $F(r_j)\to F(1)$, while absolute continuity of the integral gives
$\int_{\B_{r_j}}|Du|^2\dd x\to E$.  On $\partial\B_1$, the trace is
$u(x)=x$. Hence the tangential derivative is the identity on the tangent
space and
\[
 |D_\tau u|^2=n-1,
 \qquad
 |Du|^2=n-1+|\partial_\nu u|^2.
\]
Letting $j\to\infty$ in \eqref{eq:pohozaev-r} gives
\[
 (n-2)E
 =\int_{\partial\B_1}\bigl(n-1-|\partial_\nu u|^2\bigr)\dd S
 =(n-1)\sigma_{n-1}-P.
\]
No free-boundary flux has been omitted: the derivation starts from the distributional stationarity identity, which already contains the entire constrained variational structure.

It remains to establish the testing identity and the resulting boundary
estimate. For small $\varepsilon>0$, let $\eta_\varepsilon$ be the radial
Lipschitz function that is $1$ on $[0,1-\varepsilon]$ and equals
$(1-r)/\varepsilon$ on $[1-\varepsilon,1]$.  It belongs to
$W_0^{1,\infty}(\B_1)$, and its transition region is contained in the smooth
noncontact collar.  Test \eqref{eq:EL} with $\eta_\varepsilon u$. Since
$|u|\le1$, the test lies in $W_0^{1,2}\cap L^\infty$; the
bounded-approximation argument used above for \eqref{eq:mprime-C} extends
\eqref{eq:EL} to it. In weak form,
\begin{align*}
 \int_{\B_1}\eta_\varepsilon|Du|^2\dd x
 +\int_{\B_1}u\cdot Du\,\nabla\eta_\varepsilon\dd x
 &=\int_{\{|u|=a\}}\eta_\varepsilon|Du|^2\dd x.
\end{align*}
Thus
\begin{equation}\label{eq:test-cutoff}
 \int_{\B_1}\eta_\varepsilon
 \bigl(|Du|^2-\1_{\{|u|=a\}}|Du|^2\bigr)\dd x
 =-\int_{\B_1}u\cdot Du\,\nabla\eta_\varepsilon\dd x.
\end{equation}
The left side tends to $E-C$ by dominated convergence.  To identify the
right side, set
\[
 G(r)=\int_{\partial\B_r}u\cdot\partial_ru\dd S.
\]
The smoothness in the collar makes $G$ continuous near $r=1$.  The coarea
formula gives
\[
 -\int_{\B_1}u\cdot Du\,\nabla\eta_\varepsilon\dd x
 =\frac1\varepsilon\int_{1-\varepsilon}^1G(r)\dd r,
\]
Consequently the right side of \eqref{eq:test-cutoff} tends to $G(1)$, namely
\[
 \int_{\partial\B_1}u\cdot\partial_\nu u\dd S
 =\int_{\partial\B_1}x\cdot\partial_\nu u\dd S.
\]
This proves \eqref{eq:testing-identity}.

Cauchy--Schwarz on the boundary gives
\[
 E-C
 \le\left(\int_{\partial\B_1}|x|^2\dd S\right)^{1/2}
     \left(\int_{\partial\B_1}|\partial_\nu u|^2\dd S\right)^{1/2}
 =\sqrt{\sigma_{n-1}P}.
\]
Use \eqref{eq:Pohozaev} to substitute for $P$ and rearrange.
\end{proof}

We now use these identities to propagate the endpoint asymptotic through the
whole interval $a\in(0,1)$. For every admissible map,
$u-x\in W_0^{1,2}(\B_1)$, and hence
\begin{align}
 \int_{\B_1}|Du|^2\dd x
 &=\int_{\B_1}|I+D(u-x)|^2\dd x\notag\\
 &=\sigma_{n-1}+\int_{\B_1}|D(u-x)|^2\dd x,
 \label{eq:energy-splitting}
\end{align}
because the cross term is $2\int\operatorname{div}(u-x)=0$. Equality with $\sigma_{n-1}$ would force $u=x$ almost everywhere, which violates $|u|\ge a$ on $\B_a$. Therefore
\begin{equation}\label{eq:m-greater-sigma}
 m_n(a)>\sigma_{n-1}
 \qquad(a>0).
\end{equation}

Define the positive square root
\begin{equation}\label{eq:beta-def}
 \beta(a)^2=(n-1)-\frac{n-2}{\sigma_{n-1}}m_n(a)
\end{equation}
and
\begin{equation}\label{eq:Psi-def}
 \Psi_n(a)=\frac{(1-\beta(a))(\beta(a)+n-1)^{n-1}}{a^n}.
\end{equation}
By \eqref{eq:Pohozaev}, for any minimizer
\begin{equation}\label{eq:beta-P}
 \beta(a)^2=\frac{P}{\sigma_{n-1}}.
\end{equation}
Since $m_n(a)\le M_n(a)$,
\begin{equation}\label{eq:beta-bounds}
 0<\beta_R(a)\le\beta(a)<1.
\end{equation}
Indeed, the first two inequalities follow from comparison with the radial branch, and the last follows from \eqref{eq:m-greater-sigma}.

\begin{theorem}\label{thm:propagation}
For every $n\in\{3,4,5,6\}$ and every $a\in(0,1)$,
\begin{equation}\label{eq:m-equals-M}
 m_n(a)=M_n(a).
\end{equation}
\end{theorem}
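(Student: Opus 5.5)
The strategy is to turn the identities of \cref{prop:variational-identities} into a differential inequality for $\beta(a)$, which is equivalent to monotonicity of $\Psi_n$. Then compare $\Psi_n$ with its radial value at both ends of the parameter range.

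\textbf{Step 1: a differential inequality for $\beta$.} Fix a point $a$ where $m_n$ is differentiable; by local Lipschitz continuity this holds almost everywhere. Let $u$ be any minimizer, supplied by \cref{prop:finite-existence}. By \eqref{eq:beta-def} and \eqref{eq:beta-P}, $P=\sigma_{n-1}\beta^2$. The bound \eqref{eq:C-lower} then reads
\[
 C\ge E-\sigma_{n-1}\beta,
 \qquad
 E=\frac{\sigma_{n-1}}{n-2}\bigl((n-1)-\beta^2\bigr).
\]
Combining this with \eqref{eq:mprime-C} gives
\[
 m_n'(a)\ge\frac{2\sigma_{n-1}}{(n-2)a}\bigl((n-1)-\beta^2-(n-2)\beta\bigr)
 =\frac{2\sigma_{n-1}}{(n-2)a}(1-\beta)(\beta+n-1).
\]
Since $m_n=\frac{\sigma_{n-1}}{n-2}\bigl((n-1)-\beta^2\bigr)$, we have $m_n'=-\frac{2\sigma_{n-1}}{n-2}\beta\beta'$. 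Here $\beta$ is locally Lipschitz, because $\beta>0$ by \eqref{eq:beta-bounds}. This yields
\[
 -a\beta\beta'\ge(1-\beta)(\beta+n-1)\qquad\text{a.e.}
\]

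\textbf{Step 2: monotonicity of $\Psi_n$.} Compute
\[
 \frac{d}{da}\log\Psi_n
 =-\frac{\beta'}{1-\beta}+\frac{(n-1)\beta'}{\beta+n-1}-\frac na
 =-\beta'\,\frac{n\beta}{(1-\beta)(\beta+n-1)}-\frac na.
\]
Substituting the inequality from Step~1, which is possible because $1-\beta>0$ by \eqref{eq:beta-bounds}, makes this quantity nonnegative almost everywhere. Since $\log\Psi_n$ is locally Lipschitz on $(0,1)$, it follows that $\Psi_n$ is nondecreasing. This matches the radial equality case \eqref{eq:Mprime}, \eqref{eq:radial-Psi}, where all the inequalities are equalities.

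\textbf{Step 3: comparison at both ends.} Define $g(\beta)=(1-\beta)(\beta+n-1)^{n-1}$. Its derivative is $g'(\beta)=-n\beta(\beta+n-1)^{n-2}<0$ on $(0,1)$, so $g$ is decreasing there. Since $\beta\ge\beta_R$, \eqref{eq:radial-Psi} gives $\Psi_n(a)\le g(\beta_R)/a^n=(n-1)^{n-1}$.

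For the limit $a\downarrow0$, use \eqref{eq:endpoint}: $m_n=\sigma_{n-1}+\kappa_na^n+o(a^n)$. Then
\[
 \beta^2=1-\frac{(n-2)\kappa_n}{\sigma_{n-1}}a^n+o(a^n),
 \qquad
 1-\beta=\frac{(n-2)\kappa_n}{2\sigma_{n-1}}a^n+o(a^n)=\rho_n^{n-1}a^n+o(a^n).
\]
Hence
\[
 \Psi_n(a)\to\rho_n^{n-1}n^{n-1}=(n-1)^{n-1}.
\]
A nondecreasing function that is bounded above by $(n-1)^{n-1}$ and tends to $(n-1)^{n-1}$ at $0^+$ must be identically $(n-1)^{n-1}$. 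Therefore $g(\beta(a))=g(\beta_R(a))$, and strict monotonicity of $g$ gives $\beta=\beta_R$. Finally, \eqref{eq:beta-def} together with \eqref{eq:radial-beta-energy} gives $m_n=M_n$.

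\textbf{Main obstacle.} The main technical point is Step~2. Monotonicity of $\Psi_n$ must be derived from an inequality that holds only almost everywhere, and this requires absolute continuity of $\Psi_n$; that is exactly why local Lipschitz continuity of $m_n$ and the strict bounds $0<\beta<1$ are needed. The other delicate input is checking the constant identity $\frac{(n-2)\kappa_n}{2\sigma_{n-1}}=\rho_n^{n-1}$, which is what makes the endpoint limit match the radial value exactly.
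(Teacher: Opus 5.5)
Your proposal is correct and follows the paper's proof essentially step for step. The shared steps are: the a.e.\ inequality $-a\beta\beta'\ge(1-\beta)(\beta+n-1)$ obtained from \eqref{eq:mprime-C} and \eqref{eq:C-lower}; monotonicity of $\log\Psi_n$ via local absolute continuity, using $0<\beta<1$ on compact subintervals; the radial upper bound from the decreasing function $(1-\beta)(\beta+n-1)^{n-1}$; and the endpoint limit $(n-1)^{n-1}$ from \eqref{eq:endpoint} and the identity $(n-2)\kappa_n/(2\sigma_{n-1})=\rho_n^{n-1}$.
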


\begin{proof}
We first prove that $\Psi_n$ is locally absolutely continuous and
nondecreasing on $(0,1)$ and that
\begin{equation}\label{eq:Psi-upper}
 \Psi_n(a)\le(n-1)^{n-1}
 \qquad(0<a<1).
\end{equation}
Fix $I\Subset(0,1)$.  By \eqref{eq:beta-bounds} and the continuity of the
radial branch, $\beta\ge\beta_R\ge c_I>0$ on $I$.  Since $m_n$ is Lipschitz
there, the square-root relation \eqref{eq:beta-def} then implies that $\beta$
is absolutely continuous on $I$.  Moreover, $\beta<1$ and continuity give
$1-\beta\ge d_I>0$ on $I$.  All the factors in \eqref{eq:Psi-def}, including
their logarithms, are therefore Lipschitz on the relevant ranges.  Hence
$\Psi_n$ and $\log\Psi_n$ are absolutely continuous on $I$.

At almost every $a\in I$, the function $m_n$ is differentiable.  Choose any
minimizer at that parameter.  Differentiating \eqref{eq:beta-def} and using
\eqref{eq:mprime-C} gives
\begin{equation}\label{eq:beta-derivative}
 -\beta\beta'
 =\frac{n-2}{\sigma_{n-1}a}C.
\end{equation}
The boundary lower bound \eqref{eq:C-lower}, together with $P=\sigma_{n-1}\beta^2$, gives
\begin{align*}
 C
 &\ge E-\sigma_{n-1}\beta\\
 &=\frac{\sigma_{n-1}}{n-2}
 \bigl(n-1-\beta^2-(n-2)\beta\bigr)\\
 &=\frac{\sigma_{n-1}}{n-2}(1-\beta)(\beta+n-1).
\end{align*}
Substitution in \eqref{eq:beta-derivative} yields, at almost every such $a$,
\begin{equation}\label{eq:beta-diff-ineq}
 -\beta\beta'
 \ge\frac{(1-\beta)(\beta+n-1)}a.
\end{equation}
Since $0<\beta<1$, all denominators below are positive.  Taking the logarithm
of \eqref{eq:Psi-def} and differentiating term by term gives
\begin{align}
 \frac{\dd}{\dd a}\log\Psi_n
 &=-\frac{\beta'}{1-\beta}
 +(n-1)\frac{\beta'}{\beta+n-1}-\frac na\notag\\
 &=-\frac{n\beta\beta'}{(1-\beta)(\beta+n-1)}-\frac na
 \ge0\label{eq:Psi-derivative}
\end{align}
for almost every $a$, where the last inequality is exactly
\eqref{eq:beta-diff-ineq}.  Because $I$ was arbitrary and $\log\Psi_n$ is
absolutely continuous on $I$, the almost-everywhere differential inequality
implies that $\log\Psi_n$, and hence $\Psi_n$, is nondecreasing on $(0,1)$.

For the upper bound, set
\[
 f(\beta)=(1-\beta)(\beta+n-1)^{n-1}.
\]
On $(0,1)$,
\[
 f'(\beta)=-n\beta(\beta+n-1)^{n-2}<0.
\]
By \eqref{eq:beta-bounds}, $\beta\ge\beta_R$, and therefore
\[
 \Psi_n(a)=\frac{f(\beta(a))}{a^n}
 \le\frac{f(\beta_R(a))}{a^n}
 =(n-1)^{n-1}
\]
by \eqref{eq:radial-Psi}.

We now combine this monotonicity with the endpoint asymptotic.
The endpoint asymptotic \eqref{eq:endpoint} gives
\[
 \beta(a)^2
 =1-\frac{n-2}{\sigma_{n-1}}\kappa_na^n+o(a^n),
\]
so
\begin{equation}\label{eq:one-minus-beta}
 \frac{1-\beta(a)}{a^n}
 \longrightarrow\frac{(n-2)\kappa_n}{2\sigma_{n-1}}.
\end{equation}
Since $\beta(a)+n-1\to n$,
\begin{align*}
 \lim_{a\downarrow0}\Psi_n(a)
 &=n^{n-1}\frac{(n-2)\kappa_n}{2\sigma_{n-1}}\\
 &=n^{n-1}\rho_n^{n-1}
 =(n-1)^{n-1}.
\end{align*}
Since $\Psi_n$ is nondecreasing, for every $a>0$ it is at least its right-hand
limit at the left endpoint $0$. The opposite inequality is
\eqref{eq:Psi-upper}. Therefore
\[
 \Psi_n(a)=(n-1)^{n-1}
 \qquad\forall a\in(0,1).
\]
By the definition of $\Psi_n$, the equality just obtained and
\eqref{eq:radial-Psi} give, more explicitly,
\[
 \frac{f(\beta(a))}{a^n}
 =(n-1)^{n-1}
 =\frac{f(\beta_R(a))}{a^n}.
\]
Thus $f(\beta(a))=f(\beta_R(a))$.  Since $f$ is strictly decreasing on
$(0,1)$, it follows that $\beta(a)=\beta_R(a)$. The definition
\eqref{eq:beta-def} then gives $m_n(a)=M_n(a)$.
\end{proof}

\section{Rigidity and uniqueness}\label{sec:uniqueness}

By \cref{thm:propagation}, the radial map and every minimizer have the same
energy.  It remains to determine the equality cases.  We first recover the
normal derivative on the outer boundary, then continue the equality through
the noncontact annulus, and finally use the sphere-valued rigidity theorem in
the contact ball.

\begin{proposition}\label{prop:outer-annulus}
Every minimizer $u$ agrees with $u_{a,n}$ on
\begin{equation}\label{eq:outer-agreement}
 \B_1\setminus\overline{\B_R}.
\end{equation}
\end{proposition}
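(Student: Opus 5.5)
The plan is to recover the full Cauchy data of $u$ on $\partial\B_1$ from the equality cases in \cref{prop:variational-identities}, and then to propagate the agreement inward by unique continuation for harmonic maps.

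\emph{Step 1: the normal derivative.} By \cref{thm:propagation}, $E=m_n(a)=M_n(a)$, so $\beta(a)=\beta_R$. Then \eqref{eq:beta-P} gives $P=\sigma_{n-1}\beta_R^2$.

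\emph{Step 2: the contact energy.} We need $C$, but \eqref{eq:mprime-C} only holds where $m_n$ is differentiable. Since $m_n=M_n$ is now smooth, that identity applies at every $a$, and \eqref{eq:Mprime} gives $C=aM_n'(a)/2=C_{\rm rad}$.

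\emph{Step 3: equality in Cauchy--Schwarz.} By \eqref{eq:testing-identity},
\[
\int_{\partial\B_1}x\cdot\partial_\nu u=E-C=M_n-C_{\rm rad}=\sigma_{n-1}\beta_R=\sqrt{\sigma_{n-1}P}.
\]
So equality holds in Cauchy--Schwarz, and $\partial_\nu u=\lambda x$ on $\partial\B_1$ for a constant $\lambda\ge0$. Integrating identifies $\lambda=\beta_R$. Hence $u$ and $u_{a,n}$ have the same Dirichlet and Neumann data on $\partial\B_1$: $u=x$ and $\partial_\nu u=\beta_R x$ there.

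\emph{Step 4: agreement in the collar.} By \cref{prop:boundary-collar}, $u$ is harmonic and smooth up to $\partial\B_1$ in the collar $\B_1\setminus\overline{\B_{1-\delta}}$. The difference $u-u_{a,n}$ is harmonic in a one-sided neighborhood of $\partial\B_1$ with vanishing Cauchy data. The Cauchy--Kovalevskaya/Holmgren uniqueness (or extension by reflection across the analytic boundary followed by unique continuation) then gives $u=u_{a,n}$ near $\partial\B_1$.

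\emph{Step 5: continuation inward.} This is the main obstacle. Let
\[
r_*=\inf\{s\in(R,1):u=u_{a,n}\ \text{on }\B_1\setminus\B_s\}.
\]
Suppose $r_*>R$. On $\partial\B_{r_*}$ we have $|u_{a,n}|=w_R(r_*)>a$, so $|u|>a$ in a neighborhood of $\partial\B_{r_*}$ away from the region where $u=u_{a,n}$ already holds. Two pieces of regularity make this work. First, $\Sing u$ is contained in the coincidence interior by \cref{prop:local-consequences}(iii). Second, the continuous gap function $d_u=|u|-a$ is positive on $\partial\B_{r_*}$, so $d_u>0$ on a neighborhood $N$ of $\partial\B_{r_*}$. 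In $N$, $u$ is regular and both signs of target variation are admissible, so $u$ is harmonic and hence real analytic there. Moreover $u=u_{a,n}$ on the open set $N\cap\{|x|>r_*\}$. The set $N$ can be taken connected, for instance a thin annulus around $\partial\B_{r_*}$. Analytic continuation therefore gives $u=u_{a,n}$ on all of $N$, which contradicts the definition of $r_*$. Hence $r_*=R$, which is \eqref{eq:outer-agreement}.

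The delicate points are the openness of $\{d_u>0\}$ near $\partial\B_{r_*}$ and the fact that no singular point meets that neighborhood. Both come from the continuity of $d_u$ and from \cref{prop:local-consequences}.
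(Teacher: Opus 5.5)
Your proposal is correct and follows the paper's proof essentially step by step. You obtain $C=C_{\rm rad}$ from the now-smooth identity $m_n=M_n$ and $P=\sigma_{n-1}\beta_R^2$ from Pohozaev, use equality in Cauchy--Schwarz to get $\partial_\nu u=\beta_R x$, apply unique continuation of $u-u_{a,n}$ across $\partial\B_1$, and then continue analytically inward through $\{d_u>0\}$ using the continuity of $d_u$ and $w_R(r)>a$ for $r>R$. The only difference is cosmetic: your infimal-radius argument with thin annuli around $\partial\B_{r_*}$ replaces the paper's open--closed argument for the connected component of $\{d_u>0\}\cap\{R<|x|<1\}$ containing the collar, and both rest on the same facts.
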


\begin{proof}
We first recover the normal derivative on the outer boundary.  By
\cref{thm:propagation}, $m_n=M_n$ as functions on $(0,1)$, so $m_n$ is
smooth.
The derivative formula \eqref{eq:mprime-C} applies to every minimizer at every $a$, and \eqref{eq:Mprime} gives
\[
 C=\frac a2M_n'(a)=C_{\rm rad}.
\]
Also $E=M_n$. The Pohozaev identity for $u$ and for the radial map gives
\[
 P=\sigma_{n-1}\beta_R^2.
\]
By \eqref{eq:Eouter},
\[
 E-C=M_n-C_{\rm rad}=\sigma_{n-1}\beta_R.
\]
Thus equality holds in the Cauchy--Schwarz estimate
\[
 \int_{\partial\B_1}x\cdot\partial_\nu u
 \le\|x\|_{L^2(\partial\B_1)}
 \|\partial_\nu u\|_{L^2(\partial\B_1)}.
\]
Equality in Cauchy--Schwarz in the real Hilbert space
$L^2(\partial\B_1;\R^n)$ yields
$\partial_\nu u=\lambda x$ for a constant scalar $\lambda$ almost everywhere
on the boundary.  Since
\[
 \|x\|_{L^2(\partial\B_1)}^2=\sigma_{n-1},
 \qquad
 \|\partial_\nu u\|_{L^2(\partial\B_1)}^2
 =P=\sigma_{n-1}\beta_R^2,
\]
we have $|\lambda|=\beta_R$.  The boundary pairing is
$\lambda\sigma_{n-1}=E-C=\sigma_{n-1}\beta_R$, so
$\lambda=\beta_R>0$. Smoothness in the collar upgrades this equality to the
pointwise identity
\begin{equation}\label{eq:normal-data}
 \partial_\nu u=\beta_Rx
 \qquad\text{on }\partial\B_1.
\end{equation}

Both $u$ and $u_{a,n}$ are harmonic in a fixed collar of $\partial\B_1$.
They have the same Dirichlet data on $\partial\B_1$, and
\eqref{eq:normal-data} gives the
same Neumann data.  Put $v=u-u_{a,n}$ and shrink the collar, if necessary,
so that $v$ is smooth and harmonic on
\[
 \cC_\delta^-=\{x:1-\delta<|x|<1\}.
\]
Thus, componentwise,
\[
 v=0,
 \qquad
 \partial_\nu v=0
 \quad\text{on }\partial\B_1.
\]
We now spell out the continuation across the outer boundary.  On the
two-sided annulus
\[
 \cC_\delta=\{x:1-\delta<|x|<1+\delta\},
\]
define the zero extension
\[
 \widetilde v(x)=
 \begin{cases}
 v(x),&1-\delta<|x|<1,\\
 0,&1\le|x|<1+\delta.
 \end{cases}
\]
For any scalar component and any
$\phi\in C_c^\infty(\cC_\delta)$, Green's identity on
$\cC_\delta^-$ gives
\begin{align*}
 \langle\Delta\widetilde v,\phi\rangle
 &=\int_{\cC_\delta^-}v\,\Delta\phi\dd x\\
 &=\int_{\cC_\delta^-}(\Delta v)\phi\dd x
   +\int_{\partial\B_1}
   \bigl(v\,\partial_\nu\phi-\phi\,\partial_\nu v\bigr)\dd S.
\end{align*}
There is no term on $\{|x|=1-\delta\}$ because the support of $\phi$ stays
away from that boundary. Moreover, $\Delta v=0$ in $\cC_\delta^-$ and the two
Cauchy data in the preceding display give
\[
 \int_{\partial\B_1}
 \bigl(v\,\partial_\nu\phi-\phi\,\partial_\nu v\bigr)\dd S=0.
\]
Consequently,
\[
 \langle\Delta\widetilde v,\phi\rangle=0
 \qquad\text{for every }\phi\in C_c^\infty(\cC_\delta),
\]
so $\Delta\widetilde v=0$ in $\cD'(\cC_\delta)$. Weyl's lemma makes
$\widetilde v$ a smooth harmonic function on $\cC_\delta$. Since it vanishes
on the nonempty open set $\{1<|x|<1+\delta\}$, unique continuation on the
connected annulus gives
\[
 \widetilde v\equiv0\quad\text{in }\cC_\delta.
\]
Consequently, $u=u_{a,n}$ in an inner boundary collar.

Let
\[
 A=\{x:R<|x|<1\},
 \qquad
 \Omega_+=\{x\in\B_1:d_u(x)>0\}=\{|u|>a\},
\]
where $d_u$ is the continuous representative from
\cref{prop:local-consequences}. Let $G$ be the connected component of
$\Omega_+\cap A$ containing the boundary collar. By
\cref{prop:local-consequences}, every singular point has a neighborhood on
which $d_u=0$. Hence $\Omega_+\subset\operatorname{Reg}u$, and
\eqref{eq:EL} reduces to $\Delta u=0$ on $\Omega_+$. Thus both maps are
harmonic on $G$. Each component of $u-u_{a,n}$ is therefore real analytic
on the connected set $G$ and vanishes on a nonempty open subset. The identity
theorem for real-analytic functions gives
\begin{equation}\label{eq:agree-G}
 u=u_{a,n}\quad\text{on }G.
\end{equation}

We show that $G$ is relatively closed in $A$. Let $x_j\in G$ and $x_j\to x\in A$. By \eqref{eq:agree-G} and continuity of $d_u$,
\begin{align*}
 d_u(x)
 &=\lim_{j\to\infty}d_u(x_j)\\
 &=\lim_{j\to\infty}\bigl(w_R(|x_j|)-a\bigr)
 =w_R(|x|)-a>0,
\end{align*}
because $|x|>R$ and $w_R$ is strictly increasing there. Thus
$x\in\Omega_+\cap A$. Every connected component is closed relative to the
ambient set, so the fact that $G$ is a component of $\Omega_+\cap A$ implies
$x\in G$. Hence $G$ is relatively closed in $A$. On the other hand,
$\Omega_+\cap A$ is open in $A$, and connected components of open subsets of
$\R^n$ are open; therefore $G$ is also open in $A$. Since $A$ is connected
and $G$ is nonempty, $G=A$.
\end{proof}

We can now complete the proof of the main theorem by treating the contact
ball.

\begin{proof}[Proof of \cref{thm:main}]
Existence follows from \cref{prop:finite-existence}. Let $u$ be any minimizer.
By \cref{prop:outer-annulus}, the following exterior equality holds.
\[
 u=u_{a,n}\quad\text{on }\B_1\setminus\overline{\B_R}
\]
The restrictions of a global $W^{1,2}$ map to the two sides of the Lipschitz interface $\partial\B_R$ have the same Sobolev trace. Hence the exterior equality implies that the interior trace on $\partial\B_R$ is
\begin{equation}\label{eq:trace-R}
 u=a h,
 \qquad
 h(x)=\frac{x}{|x|}.
\end{equation}
Moreover, the outer energies agree. Since the total energies agree by \cref{thm:propagation},
\begin{equation}\label{eq:inner-energy-equality}
 E(u;\B_R)=E(u_{a,n};\B_R)
 =a^2E(h;\B_R).
\end{equation}

Inside $\B_R$, put
\[
 \rho=|u|\ge a,
 \qquad
 \omega=\frac{u}{|u|}\in W^{1,2}(\B_R;\Sph^{n-1}).
\]
The trace relation \eqref{eq:trace-R} gives $\omega=h$ on $\partial\B_R$. The polar decomposition \eqref{eq:polar-decomp} yields
\begin{align}
 E(u;\B_R)
 &=\int_{\B_R}\bigl(|D\rho|^2+\rho^2|D\omega|^2\bigr)\dd x\notag\\
 &\ge a^2\int_{\B_R}|D\omega|^2\dd x.\label{eq:inner-lower1}
\end{align}
After rescaling $\B_R$ to $\B_1$, we use the sharp identity-boundary
theorem for sphere-valued maps.  In dimension three this is
\cite[Theorem~7.1]{BCL1986}; in dimensions four through six it is
\cite[Theorem~9]{Hong2001} with $p=2$, whose hypothesis
$1<p\le n-1$ is satisfied.  These results give
\begin{equation}\label{eq:Hong-R}
 \int_{\B_R}|D\omega|^2\dd x
 \ge\int_{\B_R}|Dh|^2\dd x,
\end{equation}
with equality only if $\omega=h$ almost everywhere. Combining
\eqref{eq:inner-energy-equality}, the polar decomposition, and
\eqref{eq:Hong-R}, we obtain the following equality of nonnegative
quantities:
\[
 \begin{aligned}
 0
 &=E(u;\B_R)-a^2E(h;\B_R)\\
 &=\int_{\B_R}
 \bigl(|D\rho|^2+(\rho^2-a^2)|D\omega|^2\bigr)\dd x
 +a^2\bigl(E(\omega;\B_R)-E(h;\B_R)\bigr).
 \end{aligned}
\]
The first integral and the last energy difference are nonnegative because
$\rho\ge a$ and \eqref{eq:Hong-R} holds.  Hence every term vanishes.  In
particular, $D\rho=0$ almost everywhere. Its boundary trace is $a$, so
$\rho\equiv a$, and equality holds in \eqref{eq:Hong-R}.  The uniqueness
clause in the cited identity-boundary theorem gives $\omega=h$. Thus
\[
 u=ah=u_{a,n}\quad\text{in }\B_R.
\]
Together with the outer-annulus equality, this proves uniqueness on all of $\B_1$.
\end{proof}

\section*{Acknowledgments}
The authors are supported by National Key R\&D Program of China
2025YFA1017603. 
The authors acknowledge the use of AI tools. 
All mathematical statements and proofs were independently verified by the authors, who take full responsibility for the content of the manuscript.

\begingroup
\small
\bibliographystyle{amsplain}
\bibliography{radial_minimality}

@article{Baldes1984,
  author  = {Baldes, A.},
  title   = {Stability and uniqueness properties of the equator map from a ball into an ellipsoid},
  journal = {Math. Z.},
  volume  = {185},
  number  = {4},
  year    = {1984},
  pages   = {505--516},
  doi     = {10.1007/BF01236259}
}

@article{BCL1986,
  author  = {Brezis, H. and Coron, J.-M. and Lieb, E. H.},
  title   = {Harmonic maps with defects},
  journal = {Comm. Math. Phys.},
  volume  = {107},
  year    = {1986},
  pages   = {649--705},
  doi     = {10.1007/BF01205490}
}

@article{BernandMantelMuratovSimon2021,
  author  = {Bernand-Mantel, A. and Muratov, C. B. and Simon, T. M.},
  title   = {A quantitative description of skyrmions in ultrathin ferromagnetic
             films and rigidity of degree {$\pm1$} harmonic maps from
             {$\mathbb R^2$} to {$\mathbb S^2$}},
  journal = {Arch. Ration. Mech. Anal.},
  volume  = {239},
  year    = {2021},
  pages   = {219--299},
  doi     = {10.1007/s00205-020-01575-7}
}

@misc{DengIgnatLamy2025,
  author        = {Deng, B. and Ignat, R. and Lamy, X.},
  title         = {The conformal limit for bimerons in easy-plane chiral magnets},
  year          = {2025},
  eprint        = {2506.11955},
  archivePrefix = {arXiv},
  primaryClass  = {math.AP},
  note          = {Accepted for publication in SIAM J. Math. Anal.; arXiv:2506.11955}
}

@article{DengSunWei2024,
  author  = {Deng, B. and Sun, L. and Wei, J.-C.},
  title   = {Quantitative stability of harmonic maps from {$\mathbb R^2$} to
             {$\mathbb S^2$} with a higher degree},
  journal = {Calc. Var. Partial Differential Equations},
  volume  = {63},
  year    = {2024},
  pages   = {Paper No. 101, 34 pp.},
  doi     = {10.1007/s00526-024-02712-w}
}

@article{Duzaar1987,
  author  = {Duzaar, F.},
  title   = {Variational inequalities and harmonic mappings},
  journal = {J. Reine Angew. Math.},
  volume  = {374},
  year    = {1987},
  pages   = {39--60},
  doi     = {10.1515/crll.1987.374.39}
}

@article{DuzaarFuchs1986,
  author  = {Duzaar, F. and Fuchs, M.},
  title   = {Optimal regularity theorems for variational problems with obstacles},
  journal = {Manuscripta Math.},
  volume  = {56},
  year    = {1986},
  pages   = {209--234},
  doi     = {10.1007/BF01172157}
}

@misc{FGKSsing,
  author       = {Figalli, A. and Guerra, A. and Kim, S. and Shahgholian, H.},
  title        = {Constraint maps: singularities vs free boundaries},
  year         = {2024},
  eprint       = {2407.21128},
  archivePrefix = {arXiv},
  note         = {Preprint}
}

@article{FKSobstacle,
  author  = {Figalli, A. and Kim, S. and Shahgholian, H.},
  title   = {Constraint maps with free boundaries: the obstacle case},
  journal = {Arch. Ration. Mech. Anal.},
  volume  = {248},
  year    = {2024},
  pages   = {Article 79},
  doi     = {10.1007/s00205-024-02032-5}
}

@article{FGKSreview,
  author  = {Figalli, A. and Guerra, A. and Kim, S. and Shahgholian, H.},
  title   = {Constraint maps: insights and related themes},
  journal = {La Matematica},
  volume  = {5},
  year    = {2026},
  pages   = {Article 26},
  doi     = {10.1007/s44007-026-00209-w}
}

@article{Hong2001,
  author  = {Hong, M.-C.},
  title   = {On the minimality of the {$p$}-harmonic map {$x/|x|:\B^n\to\Sph^{n-1}$}},
  journal = {Calc. Var. Partial Differential Equations},
  volume  = {13},
  year    = {2001},
  pages   = {459--468},
  doi     = {10.1007/s005260100082}
}

@article{GuerraLamyZemas2025,
  author  = {Guerra, A. and Lamy, X. and Zemas, K.},
  title   = {Sharp quantitative stability of the {M\"obius} group among
             sphere-valued maps in arbitrary dimension},
  journal = {Trans. Amer. Math. Soc.},
  volume  = {378},
  year    = {2025},
  pages   = {1235--1259},
  doi     = {10.1090/tran/9272}
}

@article{LinWang2006,
  author  = {Lin, F.-H. and Wang, C.-Y.},
  title   = {Stable stationary harmonic maps to spheres},
  journal = {Acta Math. Sin. (Engl. Ser.)},
  volume  = {22},
  year    = {2006},
  pages   = {319--330},
  doi     = {10.1007/s10114-005-0673-7}
}

@article{Ramanathan1986,
  author  = {Ramanathan, J.},
  title   = {A remark on the energy of harmonic maps between spheres},
  journal = {Rocky Mountain J. Math.},
  volume  = {16},
  year    = {1986},
  pages   = {783--790},
  doi     = {10.1216/RMJ-1986-16-4-783}
}

@misc{Rupflin2023,
  author        = {Rupflin, M.},
  title         = {Sharp quantitative rigidity results for maps from
                   {$\mathbb S^2$} to {$\mathbb S^2$} of general degree},
  year          = {2023},
  eprint        = {2305.17045},
  archivePrefix = {arXiv},
  primaryClass  = {math.AP},
  note          = {Preprint, arXiv:2305.17045}
}

@article{SchoenUhlenbeck1982,
  author  = {Schoen, R. and Uhlenbeck, K.},
  title   = {A regularity theory for harmonic maps},
  journal = {J. Differential Geom.},
  volume  = {17},
  number  = {2},
  year    = {1982},
  pages   = {307--335},
  doi     = {10.4310/jdg/1214436923},
  note    = {Correction: J. Differential Geom. 18 (1983), 329}
}

@article{Topping2023,
  author  = {Topping, P. M.},
  title   = {A rigidity estimate for maps from {$\mathbb S^2$} to
             {$\mathbb S^2$} via the harmonic map flow},
  journal = {Bull. Lond. Math. Soc.},
  volume  = {55},
  number  = {1},
  year    = {2023},
  pages   = {338--343},
  doi     = {10.1112/blms.12731}
}

@article{Caffarelli1998,
  author  = {Caffarelli, L. A.},
  title   = {The obstacle problem revisited},
  journal = {J. Fourier Anal. Appl.},
  volume  = {4},
  number  = {4--5},
  year    = {1998},
  pages   = {383--402},
  doi     = {10.1007/BF02498216}
}

@article{ChenMusina1990,
  author  = {Chen, Y. M. and Musina, R.},
  title   = {Harmonic mappings into manifolds with boundary},
  journal = {Ann. Scuola Norm. Sup. Pisa Cl. Sci. (4)},
  volume  = {17},
  number  = {3},
  year    = {1990},
  pages   = {365--392}
}

@article{CoronGulliver1989,
  author  = {Coron, J.-M. and Gulliver, R. D.},
  title   = {Minimizing {$p$}-harmonic maps into spheres},
  journal = {J. Reine Angew. Math.},
  volume  = {401},
  year    = {1989},
  pages   = {82--100},
  doi     = {10.1515/crll.1989.401.82}
}

@misc{FGKSBernoulli,
  author        = {Figalli, A. and Guerra, A. and Kim, S. and Shahgholian, H.},
  title         = {Constraint maps with free boundaries: the {Bernoulli} case},
  year          = {2023},
  eprint        = {2311.03006},
  archivePrefix = {arXiv},
  primaryClass  = {math.AP},
  note          = {To appear in J. Eur. Math. Soc.; arXiv:2311.03006}
}

@article{FGKSNotices2025,
  author  = {Figalli, A. and Guerra, A. and Kim, S. and Shahgholian, H.},
  title   = {Constraint maps and free boundaries},
  journal = {Notices Amer. Math. Soc.},
  volume  = {72},
  number  = {5},
  year    = {2025},
  pages   = {494--503},
  doi     = {10.1090/noti3162}
}

@article{HardtKinderlehrerLin1988,
  author  = {Hardt, R. and Kinderlehrer, D. and Lin, F.-H.},
  title   = {Stable defects of minimizers of constrained variational principles},
  journal = {Ann. Inst. H. Poincar\'e Anal. Non Lin\'eaire},
  volume  = {5},
  number  = {4},
  year    = {1988},
  pages   = {297--322},
  doi     = {10.1016/S0294-1449(16)30340-7}
}

@article{HardtLin1987,
  author  = {Hardt, R. and Lin, F.-H.},
  title   = {Mappings minimizing the {$L^p$} norm of the gradient},
  journal = {Comm. Pure Appl. Math.},
  volume  = {40},
  number  = {5},
  year    = {1987},
  pages   = {555--588},
  doi     = {10.1002/cpa.3160400503}
}

@article{HardtLin1989,
  author  = {Hardt, R. and Lin, F.-H.},
  title   = {Stability of singularities of minimizing harmonic maps},
  journal = {J. Differential Geom.},
  volume  = {29},
  number  = {1},
  year    = {1989},
  pages   = {113--123},
  doi     = {10.4310/jdg/1214442637}
}

@article{Hildebrandt1972,
  author  = {Hildebrandt, S.},
  title   = {On the regularity of solutions of two-dimensional variational
             problems with obstructions},
  journal = {Comm. Pure Appl. Math.},
  volume  = {25},
  number  = {4},
  year    = {1972},
  pages   = {479--496},
  doi     = {10.1002/cpa.3160250407}
}

@article{Hong2000,
  author  = {Hong, M.-C.},
  title   = {On the {J\"ager--Kaul} theorem concerning harmonic maps},
  journal = {Ann. Inst. H. Poincar\'e Anal. Non Lin\'eaire},
  volume  = {17},
  number  = {1},
  year    = {2000},
  pages   = {35--46},
  doi     = {10.1016/S0294-1449(99)00103-1}
}

@article{IwaniecKovalevOnninen2011,
  author  = {Iwaniec, T. and Kovalev, L. V. and Onninen, J.},
  title   = {The {Nitsche} conjecture},
  journal = {J. Amer. Math. Soc.},
  volume  = {24},
  number  = {2},
  year    = {2011},
  pages   = {345--373},
  doi     = {10.1090/S0894-0347-2010-00685-6}
}

@article{IwaniecOnninen2012,
  author  = {Iwaniec, T. and Onninen, J.},
  title   = {{$n$}-Harmonic Mappings between Annuli: The Art of Integrating
             Free Lagrangians},
  journal = {Mem. Amer. Math. Soc.},
  volume  = {218},
  number  = {1023},
  year    = {2012},
  pages   = {viii+105}
}

@article{JagerKaul1979,
  author  = {J\"ager, W. and Kaul, H.},
  title   = {Uniqueness and stability of harmonic maps and their {Jacobi} fields},
  journal = {Manuscripta Math.},
  volume  = {28},
  number  = {1--3},
  year    = {1979},
  pages   = {269--291},
  doi     = {10.1007/BF01647975}
}

@article{JagerKaul1983,
  author  = {J\"ager, W. and Kaul, H.},
  title   = {Rotationally symmetric harmonic maps from a ball into a sphere
             and the regularity problem for weak solutions of elliptic systems},
  journal = {J. Reine Angew. Math.},
  volume  = {343},
  year    = {1983},
  pages   = {146--161},
  doi     = {10.1515/crll.1983.343.146}
}

@article{Lin1987,
  author  = {Lin, F.-H.},
  title   = {A remark on the map {$x/|x|$}},
  journal = {C. R. Acad. Sci. Paris S\'er. I Math.},
  volume  = {305},
  number  = {12},
  year    = {1987},
  pages   = {529--531}
}

@incollection{Lin2016FreeBoundary,
  author    = {Lin, F.-H.},
  title     = {Lectures on Elliptic Free Boundary Problems},
  booktitle = {Lectures on the Analysis of Nonlinear Partial Differential
               Equations, Part 4},
  editor    = {Chemin, J.-Y. and Lin, F.-H. and Zhang, P.},
  series    = {Morningside Lectures in Mathematics},
  volume    = {4},
  publisher = {Higher Education Press and International Press},
  address   = {Beijing and Boston},
  year      = {2016},
  pages     = {115--193}
}

@article{Luckhaus1988,
  author  = {Luckhaus, S.},
  title   = {Partial {H\"older} continuity for minima of certain energies
             among maps into a {Riemannian} manifold},
  journal = {Indiana Univ. Math. J.},
  volume  = {37},
  number  = {2},
  year    = {1988},
  pages   = {349--367},
  doi     = {10.1512/iumj.1988.37.37017}
}

@article{SchoenUhlenbeck1984,
  author  = {Schoen, R. and Uhlenbeck, K.},
  title   = {Regularity of minimizing harmonic maps into the sphere},
  journal = {Invent. Math.},
  volume  = {78},
  number  = {1},
  year    = {1984},
  pages   = {89--100},
  doi     = {10.1007/BF01388715}
}

@article{Tomi1972,
  author  = {Tomi, F.},
  title   = {Variationsprobleme vom {Dirichlet}-{Typ} mit einer {Ungleichung}
             als {Nebenbedingung}},
  journal = {Math. Z.},
  volume  = {128},
  year    = {1972},
  pages   = {43--74},
  doi     = {10.1007/BF01111513}
}
\endgroup

\bigskip
\noindent
(Bin Deng) School of Mathematics and Statistics, Wuhan University, Wuhan,
Hubei Province, P.R. China, 430072.\\
Email address: \href{mailto:dbmath@whu.edu.cn}{dbmath@whu.edu.cn}
\par\smallskip
\noindent
(Jiahuan Li) School of Mathematical Sciences, University of Science and
Technology of China, Hefei, 230026, Anhui Province, P.R. China.\\
Email address:
\href{mailto:jiahuan@mail.ustc.edu.cn}{jiahuan@mail.ustc.edu.cn}
\par\smallskip
\noindent
(Yilu Liu) School of Mathematical Sciences, University of Science and
Technology of China, Hefei, 230026, Anhui Province, P.R. China.\\
Email address:
\href{mailto:liuylgeoanaly@mail.ustc.edu.cn}{liuylgeoanaly@mail.ustc.edu.cn}
\par\smallskip
\noindent
(Xi-Nan Ma) School of Mathematical Sciences, University of Science and
Technology of China, Hefei, 230026, Anhui Province, P.R. China.\\
Email address: \href{mailto:xinan@ustc.edu.cn}{xinan@ustc.edu.cn}

\end{document}